\DeclareFontFamily{OT2}{cmr}{\hyphenchar\font45 }
\DeclareFontShape{OT2}{cmr}{m}{l}{%
<5><6><7><8><9>gen*wncyr%
<10><10.95><12><14.4><17.28><20.74><24.88>wncyr10}{}
\DeclareMathAlphabet{\mathcyr}{OT2}{cmr}{m}{l}
\DeclareMathAlphabet{\mathcyb}{OT2}{cmr}{b}{l}
\SetMathAlphabet{\mathcyr}{bold}{OT2}{cmr}{b}{l}
\newtheorem{thm}{Theorem}[section]
\newtheorem{lem}[thm]{Lemma}
\theoremstyle{definition}
\newtheorem{defn}[thm]{Definition}
\theoremstyle{remark}
\newtheorem{rem}[thm]{Remark}
\newcommand{\hast}{\mathbin{\hat{\ast}}}
\newcommand{\sha}{\mathbin{\widetilde{\mathcyr{sh}}}}
\begin{document}

\title[A note on Ohno sums for MZVs]{A note on Ohno sums for multiple zeta values}

\author{Hideki Murahara}
\address[Hideki Murahara]{The University of Kitakyushu,
4-2-1 Kitagata, Kokuraminami-ku, Kitakyushu, Fukuoka, 802-8577, Japan} 
\email{hmurahara@mathformula.page}

\keywords{Multiple zeta values, Ohno relation, Ohno sum}
\subjclass[2010]{Primary 11M32; Secondary 05A19}

\begin{abstract}
 The Ohno relation is a well-known relation among multiple zeta values. 
 Hirose, Onozuka, Sato, and the author investigated the sum related to the Ohno relation and presented two types of new relations and five conjectural formulas. 
 This paper proves one of these formulas. 
\end{abstract}

\maketitle

\section{Introduction}
We call a sequence of positive integers ``index'' and an index whose last component greater than $1$ ``admissible index''.
For an admissible index $(k_1,\dots, k_r)$, the multiple zeta values (MZVs) are defined by 
\begin{align*}
 \zeta(k_1,\dots, k_r)
 :=\sum_{1\le m_1<\cdots <m_r} \frac{1}{m_1^{k_1}\cdots m_r^{k_r}} \in \mathbb{R}.  
\end{align*} 
For an index (or a sequence of nonnegative integers) $\boldsymbol{k}=(k_1,\dots,k_r)$, we call $\left|\boldsymbol{k}\right|=k_1+\cdots+k_r$ its weight and $r$ its depth. 
For an admissible index 
\[
 \boldsymbol{k}=(\underbrace{1,\ldots,1}_{a_1-1},b_1+1,\dots,\underbrace{1,\ldots,1}_{a_l-1},b_l+1) \quad (a_p, b_q\ge1),
\]
we define the dual index of $\boldsymbol{k}$ by 
\[
 \boldsymbol{k}^\dagger :=(\underbrace{1,\ldots,1}_{b_l-1},a_l+1,\dots,\underbrace{1,\ldots,1}_{b_1-1},a_1+1).
\]
If an index $\boldsymbol{k}$ and a sequence of nonnegative integers $\boldsymbol{e}$ are the same depth, we denote by $\boldsymbol{k} \oplus \boldsymbol{e}$ the index obtained by componentwise addition. 
Hereafter, if we write $\boldsymbol{e}$, we always assume that $\boldsymbol{e}$ runs over sequences of nonnegative integers having suitable depth. 
In \cite{HMOS20}, Hirose, Onozuka, Sato, and the author considered the following sums.
\begin{defn}[Ohno sum]
 For an admissible index $\boldsymbol{k}$ and a nonnegative integer $m$, 
 we define $\mathcal{O}_m(\boldsymbol{k})$, $\mathcal{O}(\boldsymbol{k})$, respectively by
 \begin{align*}
  \mathcal{O}_m(\boldsymbol{k})
  &:=\sum_{|\boldsymbol{e}|=m}
  \zeta (\boldsymbol{k}\oplus\boldsymbol{e}) \in\mathbb{R}, \\
  \mathcal{O}(\boldsymbol{k})
  &:=\sum_{m=0}^\infty \mathcal{O}_m(\boldsymbol{k})X^m \in\mathbb{R}[[X]].
 \end{align*}
\end{defn}
Using the above notation, the celebrated Ohno relation is written as follows: 
\begin{thm}[Ohno relation; Ohno \cite{Oho99}] \label{ohno}
 For an admissible index $\boldsymbol{k}$, we have
 \begin{align*}
  \mathcal{O} (\boldsymbol{k})
  =\mathcal{O} (\boldsymbol{k}^\dagger).
 \end{align*}
\end{thm}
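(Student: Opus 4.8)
I would interpolate the identity by a single generating series, observe that it is a one-parameter deformation of the classical duality theorem $\zeta(\boldsymbol k)=\zeta(\boldsymbol k^\dagger)$, and prove that deformation by putting the series into an iterated-integral form on which the involution $t\mapsto 1-t$ acts.

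First I would unfold the Ohno sum. Writing $\zeta(\boldsymbol k\oplus\boldsymbol e)=\sum_{0<n_1<\cdots<n_r}\prod_i n_i^{-k_i-e_i}$ and carrying out the sum over $\boldsymbol e$ before anything else, the inner sum is a complete homogeneous symmetric polynomial in $n_1^{-1},\dots,n_r^{-1}$; since $\sum_{m\ge 0}h_m(x_1,\dots,x_r)X^m=\prod_i(1-Xx_i)^{-1}$ this yields
\[
  \mathcal{O}(\boldsymbol k)(X)=\sum_{0<n_1<\cdots<n_r}\ \prod_{i=1}^r\frac{1}{n_i^{\,k_i-1}\,(n_i-X)}.
\]
At $X=0$ the right-hand side is $\zeta(\boldsymbol k)$, so Theorem~\ref{ohno} is exactly the assertion that this series is invariant under $\boldsymbol k\mapsto\boldsymbol k^\dagger$: an ``$X$-deformed duality''.

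Then I would promote the series to an iterated integral. Using $m^{-1}=\int_0^1 t^{m-1}\,dt$ and $(m-X)^{-1}=\int_0^1 t^{m-X-1}\,dt$ and nesting exactly as in the standard derivation of the integral representation of MZVs --- each factor $\tfrac{dt}{1-t}$ summing a geometric series that advances the running index $n_{i-1}\rightsquigarrow n_i$ --- one obtains $\mathcal{O}(\boldsymbol k)(X)$ as an integral over the simplex $\{0<t_1<\cdots<t_w<1\}$ of a product of $w$ differential $1$-forms, one per letter of the word of $\boldsymbol k$, in which the plain forms $\tfrac{dt}{t}$ and $\tfrac{dt}{1-t}$ are replaced by $X$-twisted ones carrying prefactors $t^{\pm X}$ or $(1-t)^{\pm X}$, arranged so that each index $n_i$ acquires exactly one shifted denominator $n_i-X$ and the twists telescope from block to block. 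Because $\dagger$ reverses the word and interchanges its two letters, the change of variables $t_j\mapsto 1-t_{w+1-j}$ reverses the sequence of forms and swaps the twisted ``$\tfrac{dt}{t}$''-forms with the twisted ``$\tfrac{dt}{1-t}$''-forms; if the twisting pattern is chosen to be stable under this operation, the integral for $\boldsymbol k$ is carried onto the one for $\boldsymbol k^\dagger$, which proves the theorem. The coefficientwise identity $\mathcal{O}_m(\boldsymbol k)=\mathcal{O}_m(\boldsymbol k^\dagger)$ then comes for free, since the substitution preserves both the simplex and the order of the expansion in $X$.

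The substantive step is the last one: exhibiting a twist pattern that really is self-dual. The obvious symmetric guess $\tfrac{dt}{t^{1-X}(1-t)^X}$ and $\tfrac{dt}{t^X(1-t)^{1-X}}$ is stable under $t\mapsto 1-t$ but gives the wrong series --- it attaches a factor $(n_i-X)^{-2}$ to each index rather than one $(n_i-X)^{-1}$ together with one $n_i^{-1}$ --- so the symmetry must be broken inside each block (say, a $t^{-X}$ at the block's outer end, annihilated by a $t^{X}$ at the inner end of the next block), and one then has to verify that reversal together with the letter swap sends this pattern for $\boldsymbol k$ to the pattern the same recipe produces for $\boldsymbol k^\dagger$. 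An alternative that sidesteps this combinatorics is the connected-sum method: construct an $X$-dependent connector specializing to $\tfrac{m!\,n!}{(m+n)!}$ at $X=0$, form the associated connected double series, prove two transport relations, and transport all of $\boldsymbol k$ across to the $\boldsymbol k^\dagger$ side; the cost is that one must find the correct connector and check the boundary values.
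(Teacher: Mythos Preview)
The paper does not prove Theorem~\ref{ohno} at all: it is quoted as a known result from Ohno's 1999 paper and used as a black box (for instance in the proof of Lemma~\ref{Fmpre1} and Lemma~\ref{oooo}). There is therefore no ``paper's own proof'' to compare your proposal against.

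As for the proposal itself, what you have written is a plan rather than a proof, and you say so yourself. The generating-series identity
\[
  \mathcal{O}(\boldsymbol k)(X)=\sum_{0<n_1<\cdots<n_r}\prod_{i=1}^r\frac{1}{n_i^{k_i-1}(n_i-X)}
\]
is correct and is a standard first reduction. The iterated-integral strategy is also a known route to Ohno's relation, but your own paragraph beginning ``The substantive step is the last one'' identifies the gap precisely: you have not produced the self-dual twist pattern, and you note that the symmetric guess fails. Saying that one ``must verify'' the reversal sends the pattern for $\boldsymbol k$ to that for $\boldsymbol k^\dagger$, or that one ``must find the correct connector'' for the connected-sum approach, is exactly the content of the theorem; without carrying one of these out, nothing has been proved. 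If you want to complete the argument along the connected-sum line you allude to, the Seki--Yamamoto connector does the job and the verification is short; alternatively, Ohno's original argument via Landen-type connection formulas for multiple polylogarithms is another self-contained route.
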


We denote by $\mathcal{I}$ the $\mathbb{Q}$-linear space spanned by all indices.
We define the $\mathbb{Q}$-bilinear product $\sha \colon \mathcal{I} \times \mathcal{I} \to \mathcal{I}$ by  
\begin{align*}
 \emptyset \sha \boldsymbol{k}&=\boldsymbol{k}\sha \emptyset=\boldsymbol{k}, \\
 (\boldsymbol{k},k) \sha (\boldsymbol{l},l) 
 &= (\boldsymbol{k} \sha (\boldsymbol{l},l), k) 
  +((\boldsymbol{k},k) \sha \boldsymbol{l}, l),
\end{align*}
where $\boldsymbol{k},\boldsymbol{l}\in\mathcal{I}$ and $k,l\in\mathbb{Z}_{\ge1}$.  
For example, we have $(a) \sha (b,c)=(a,b,c)+(b,a,c)+(b,c,a)$. 
For an admissible index $\boldsymbol{k}$ and an integer $s\ge2$, put
 \[
  F(s;\boldsymbol{k})
  :=\mathcal{O}((s)\sha \boldsymbol{k})-\mathcal{O}((s)\sha \boldsymbol{k}^\dagger).
 \]
Then the following Ohno sum relation holds:
\begin{thm}[Hirose--Murahara--Onozuka--Sato \cite{HMOS20}] \label{HMOS}
 For integers $s,t\ge2$, we have
 \[
  F(s;(t+1))=F(t;(s+1)).
 \]
\end{thm}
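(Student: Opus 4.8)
The plan is to set up an explicit ``multiple Dirichlet series'' model for $\mathcal{O}$, in which the shuffle by $(s)$ appears as a product minus a diagonal correction, and then let Theorem~\ref{ohno} annihilate the bulk of the resulting terms. Summing the geometric series $\sum_{e\ge0}(X/m_i)^{e}$ in the definition of $\mathcal{O}_m$ gives, for any admissible index $\boldsymbol{k}=(k_1,\dots,k_r)$,
\[
 \mathcal{O}(\boldsymbol{k})=\sum_{0<m_1<\cdots<m_r}\ \prod_{i=1}^{r}\frac{1}{m_i^{k_i-1}(m_i-X)}\ \in\ \mathbb{R}[[X]],
\]
each factor $\tfrac{1}{m^{k-1}(m-X)}$ being read as $\sum_{e\ge0}X^{e}m^{-k-e}$. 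First I would record the harmonic-type identity obtained by multiplying $\mathcal{O}((s))=\sum_{n\ge1}\tfrac{1}{n^{s-1}(n-X)}$ into $\mathcal{O}(\boldsymbol{l})$ and sorting according to the position of $n$ among $m_1<\cdots<m_r$:
\[
 \mathcal{O}\bigl((s)\sha\boldsymbol{l}\bigr)=\mathcal{O}((s))\,\mathcal{O}(\boldsymbol{l})-\sum_{i=1}^{r}C_i(\boldsymbol{l};s),\qquad
 C_i(\boldsymbol{l};s):=\sum_{0<m_1<\cdots<m_r}\frac{1}{m_i^{s+l_i-2}(m_i-X)^{2}}\prod_{j\ne i}\frac{1}{m_j^{l_j-1}(m_j-X)},
\]
where the terms with $n\notin\{m_1,\dots,m_r\}$ reassemble into $\mathcal{O}((s)\sha\boldsymbol{l})$ (by the definition of the index shuffle) and the term $n=m_i$ gives $C_i(\boldsymbol{l};s)$.

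Next I would apply this with $\boldsymbol{l}=(t+1)$ and with $\boldsymbol{l}=(t+1)^\dagger=(\{1\}^{t-1},2)$ and subtract. For $\boldsymbol{l}=(t+1)$ there is only $C_1((t+1);s)=\sum_{n\ge1}\tfrac{1}{n^{s+t-1}(n-X)^{2}}=\partial_X\mathcal{O}((s+t))$; for $\boldsymbol{l}=(t+1)^\dagger$, Theorem~\ref{ohno} gives $\mathcal{O}((t+1)^\dagger)=\mathcal{O}((t+1))$, so the two products $\mathcal{O}((s))\mathcal{O}((t+1))$ cancel in the difference. With $A(s,t):=\sum_{i=1}^{t}C_i\bigl((t+1)^\dagger;s\bigr)$ this yields
\[
 F\bigl(s;(t+1)\bigr)=\mathcal{O}\bigl((s)\sha(t+1)\bigr)-\mathcal{O}\bigl((s)\sha(t+1)^\dagger\bigr)=A(s,t)-\partial_X\mathcal{O}((s+t)).
\]
Since $\partial_X\mathcal{O}((s+t))$ is visibly unchanged under $s\leftrightarrow t$, the theorem reduces to the single identity $A(s,t)=A(t,s)$. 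Collecting the factors $(m_j-X)^{-1}$ in $C_i$ rewrites
\[
 A(s,t)=\sum_{0<m_1<\cdots<m_t}\ \Bigl(\prod_{j=1}^{t-1}\frac{1}{m_j-X}\Bigr)\frac{1}{m_t(m_t-X)}\ \sum_{i=1}^{t}\frac{1}{m_i^{s-1}(m_i-X)}.
\]

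The remaining task, $A(s,t)=A(t,s)$, is the crux. I would attack it by a two-variable generating function: using $\sum_{s\ge2}Y^{s-2}\,m^{-(s-1)}(m-X)^{-1}=\bigl((m-X)(m-Y)\bigr)^{-1}$ to sum over $s$ and $\sum_{r\ge0}Z^{r}\sum_{0<n_1<\cdots<n_r<N}\prod_j(n_j-X)^{-1}=\prod_{1\le n<N}\bigl(1+Z(n-X)^{-1}\bigr)$ to sum over the depth $t$, the assertion $A(s,t)=A(t,s)$ for all $s,t\ge2$ becomes the $Y\leftrightarrow Z$ symmetry of a single series $\sum_{t\ge2}Z^{t-2}\sum_{s\ge2}Y^{s-2}A(s,t)$, which I would then try to read off after isolating the pole at $m_t$ and applying the partial fractions $\tfrac{1}{m(m-X)}=\tfrac1X\bigl(\tfrac1{m-X}-\tfrac1m\bigr)$ and $\tfrac{1}{(m-X)(m-Y)}=\tfrac{1}{X-Y}\bigl(\tfrac1{m-X}-\tfrac1{m-Y}\bigr)$. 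Alternatively one can extract the coefficient of $X^{N}$: both sides become explicit $\mathbb{Q}$-linear combinations of multiple zeta values of weight $s+t+1+N$, but of depths $t$ and $s$ respectively, so the two live in different depths and one must supply the appropriate reorganization (from the sum formula together with duality, or by the connected-sum method, carrying a connector between the variable supporting $(s)$ and one variable of $(t+1)^\dagger$). This depth-crossing symmetrization of $A$ is the step I expect to be hardest; everything before it is formal manipulation of the series model plus one use of the Ohno relation.
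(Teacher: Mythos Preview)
Your reduction is correct and essentially reproduces, at the generating-series level, what the paper does in Lemma~\ref{Fmpre1}: writing $\mathcal{O}((s))\mathcal{O}(\boldsymbol{l})$ as $\mathcal{O}((s)\sha\boldsymbol{l})$ plus diagonal corrections, applying this to $\boldsymbol{l}=(t+1)$ and $\boldsymbol{l}=(t+1)^\dagger$, and letting Theorem~\ref{ohno} cancel the products. Your formula $F(s;(t+1))=A(s,t)-\partial_X\mathcal{O}((s+t))$ is right, and the problem does reduce to $A(s,t)=A(t,s)$.

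The gap is that you stop precisely there. The identity $A(s,t)=A(t,s)$ is not a formality: it compares a depth-$t$ nested sum with a depth-$s$ one, and neither of your two sketched attacks is carried out. The generating-function route does not lead to a visibly $Y\leftrightarrow Z$ symmetric expression after the manipulations you describe---the extra factor $\sum_i \tfrac{1}{(m_i-X)(m_i-Y)}$ inside the nested sum prevents the product $\prod_{n<N}(1+Z/(n-X))$ from packaging everything, and the partial-fraction step only trades one asymmetric expression for another. The ``sum formula plus duality'' suggestion is likewise just a label: already at $X=0$ the terms of $A(s,t)$ are the $\zeta(\{1\}^{i-1},s+1,\{1\}^{t-1-i},2)$ and $\zeta(\{1\}^{t-1},s+2)$, and their duals are of the shape $(t-i+1,\{1\}^{s-1},i+1)$, which do not match the terms of $A(t,s)$. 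So a further relation among MZVs is needed and you have not supplied it.

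The paper's proof (the case $l=0$ of Theorem~\ref{main}) handles this residual identity differently. Instead of attacking $A(s,t)=A(t,s)$ head-on, it derives the recursion $D_{m-1}(s,t)=D_m(s-1,t)+D_m(s,t-1)$ (Lemma~\ref{dddd}, via Lemmas~\ref{Fmpre2}--\ref{oooo}), which together with the antisymmetry $D_m(s,t)=-D_m(t,s)$ reduces everything to the single boundary case $t=2$. That boundary case is then dispatched by Hoffman's relation (Theorem~\ref{hof}). If you want to complete your argument, the cleanest path is to translate this recursion into your series language---it says $A(s,t)-A(t,s)$ satisfies a simple shift identity in $(s,t)$ modulo $X$---and then verify $A(s,2)=A(2,s)$ directly using Hoffman's relation; your current write-up contains neither ingredient.
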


We denote by $\{a\}^l$ $l$-times repetition of $a$, e.g., $(\{2\}^3)=(2,2,2)$. 
The aim of this paper is to prove the following relation conjectured by Hirose, Onozuka, Sato, and the author in \cite{HMOS20}. 
\begin{thm}[Main theorem] \label{main}
 For integers $s,t\ge2$ and $l\ge0$, we have
 \[
  F(s;(t+1)\sha(\{2\}^{l}))
  =F(t;(s+1)\sha(\{2\}^{l})).
 \]
\end{thm}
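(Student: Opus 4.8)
The plan is to argue by induction on $l$, the base case $l=0$ being precisely Theorem~\ref{HMOS}; here $F(s;-)$ and the dual $\boldsymbol k\mapsto\boldsymbol k^{\dagger}$ are extended $\mathbb Q$-linearly to the span of admissible indices, so that the statement makes sense. First I would record the two bookkeeping facts that drive the induction. Because $(\{2\}^{l})=\tfrac1{l!}\,(2)^{\sha l}$ in the index-shuffle algebra, shuffling $(t+1)$ with a single letter yields the explicit expansions
\[
 (t+1)\sha(\{2\}^{l})=\sum_{i=0}^{l}(\{2\}^{i},t+1,\{2\}^{l-i}),
 \qquad
 \bigl((t+1)\sha(\{2\}^{l})\bigr)^{\dagger}=\sum_{i=0}^{l}\bigl(\{2\}^{i},\{1\}^{t-1},2,\{2\}^{l-i}\bigr),
\]
the second by a direct computation with the block description of $\dagger$, and likewise with $s$ and $t$ swapped; moreover $(t+1)\sha(\{2\}^{l+1})=\tfrac1{l+1}\bigl[(t+1)\sha(\{2\}^{l})\bigr]\sha(2)$, so the inductive step is exactly the statement that shuffling through one more $(2)$ preserves the $s\leftrightarrow t$ symmetry of $F$.

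To make that step manageable I would pass to the usual algebraic model. Encoding an index by $\mathrm w(k_1,\dots,k_r)=yx^{k_1-1}\cdots yx^{k_r-1}\in\mathbb Q\langle x,y\rangle$, one has $\mathcal O(\boldsymbol k)=Z\circ\psi_X(\mathrm w(\boldsymbol k))$, where $Z$ is the MZV evaluation and $\psi_X$ is the concatenation homomorphism with $\psi_X(x)=x$ and $\psi_X(y)=y(1-Xx)^{-1}$; the Ohno relation (Theorem~\ref{ohno}) then says $Z\circ\psi_X$ annihilates $w-\tau(w)$ for every admissible word $w$, where $\tau$ (reverse, then swap $x\leftrightarrow y$) realizes $\dagger$. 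The virtue of this model is that $\tau$ is an automorphism of the Hoffman shuffle $\sh$, while $\dagger$ is \emph{not} an automorphism of the index shuffle $\sha$ --- already $\bigl((3)\sha(2)\bigr)^{\dagger}=(2,1,2)+(1,2,2)\neq 2(1,2,2)+(2,1,2)=(3)^{\dagger}\sha(2)^{\dagger}$. So the crux of the inductive step is to rewrite the termwise dual $\bigl(\boldsymbol w\sha(2)\bigr)^{\dagger}$ --- validly only \emph{after} applying $\mathcal O$, i.e.\ modulo the Ohno relation --- in terms of $\boldsymbol w^{\dagger}$, the factor $(2)$, and an explicit remainder, using the interplay between $\sha$, $\sh$ and the harmonic-type products $\harb,\hast$ from \cite{HMOS20}. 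Given such a reduction, $F\bigl(s;\boldsymbol w\sha(2)\bigr)$ for $\boldsymbol w=(t+1)\sha(\{2\}^{l})$ should collapse to $F(s;\boldsymbol w)$ --- which the inductive hypothesis identifies with $F(t;(s+1)\sha(\{2\}^{l}))$ --- plus terms manifestly symmetric under $s\leftrightarrow t$, and the induction closes.

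The main obstacle, as I see it, is exactly this last rewriting: the $z$-alphabet shuffle $\sha$ interacts badly with $\dagger$, and the Ohno relation only repairs the mismatch after summation, so what is really needed is a combinatorial identity expressing the dual of a $(2)$-shuffle as a shuffle of duals up to an Ohno-trivial correction. If that proves too delicate I would instead try to bypass the induction by assembling the generating series $\sum_{l\ge0}F\bigl(s;(t+1)\sha(\{2\}^{l})\bigr)Y^{l}=F\bigl(s;(t+1)\sha(1-Y(2))^{-1}\bigr)$ (with $(1-Y(2))^{-1}=\sum_{l\ge0}Y^{l}(\{2\}^{l})$ in the concatenation algebra) and evaluating it in closed form as a two-variable Ohno-type quantity, where symmetry in $s$ and $t$ would be visible; but since that seems to demand a genuinely new formula, the induction looks like the more promising route.
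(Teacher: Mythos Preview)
Your proposal is a plan rather than a proof, and it stalls at precisely the point you yourself flag as ``the main obstacle.'' Induction on $l$ would require relating $F\bigl(s;\boldsymbol w\sha(2)\bigr)$ to $F(s;\boldsymbol w)$, but $F(s;-)$ is built from $\mathcal O((s)\sha-)$ and $\mathcal O\bigl((s)\sha(-)^{\dagger}\bigr)$, neither of which is multiplicative for $\sha$. Even if the defect $(\boldsymbol w\sha(2))^{\dagger}-\boldsymbol w^{\dagger}\sha(2)$ happened to lie in $\ker\mathcal O$, what you would actually need is that $(s)\sha\bigl[(\boldsymbol w\sha(2))^{\dagger}-\boldsymbol w^{\dagger}\sha(2)\bigr]$ lies in $\ker\mathcal O$, which is strictly stronger and has no reason to hold. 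Your passage to the $x,y$-word model does not help here either: the index shuffle $\sha$ does \emph{not} become the word shuffle $\sh$ under $\mathrm w$ (it corresponds to shuffling the blocks $yx^{k-1}$ as single symbols), so the fact that $\tau$ respects $\sh$ gives no leverage on $(s)\sha(-)^{\dagger}$. Finally, the inductive hypothesis supplies only the equality $F(s;\boldsymbol w)=F(t;\boldsymbol w')$, not equality of the constituent $\mathcal O$-terms, so there is no evident way to isolate $F(s;\boldsymbol w)$ inside $F\bigl(s;\boldsymbol w\sha(2)\bigr)$. Your fallback generating series in $Y$ merely repackages the problem.

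The paper avoids induction on $l$ entirely. With $l$ fixed, writing $D_{m,l}(s,t)$ for the coefficient of $X^{m}$ in $F\bigl(s;(t+1)\sha(\{2\}^{l})\bigr)-F\bigl(t;(s+1)\sha(\{2\}^{l})\bigr)$, one proves (Lemma~\ref{dddd}) the recursion
\[
 D_{m-1,l}(s,t)=D_{m,l}(s-1,t)+D_{m,l}(s,t-1)\qquad(s,t\ge3),
\]
which together with the antisymmetry $D_{m,l}(s,t)=-D_{m,l}(t,s)$ reduces everything to the boundary $D_{m,l}(s,2)=0$. That boundary case is then established by a direct computation: at $t=2$ the dual indices degenerate to $(\{2\}^{i},1,\{2\}^{l-i+1})$, and after reorganising the resulting Ohno sums (Lemmas~\ref{add1} and~\ref{add2}) the difference $F_{m,l}(s;(3))-F_{m,l}(2;(s+1))$ is written as a weighted sum, over $m_{1}+\cdots+m_{l+1}=m+s$, of exactly the combination
\[
 \sum_{i=1}^{l+1}\Bigl(-\zeta(\dots,m_i+3,\dots)+\sum_{j=0}^{m_i}\zeta(\dots,j+1,m_i-j+2,\dots)\Bigr),
\]
which vanishes by Hoffman's relation (Theorem~\ref{hof}) applied to $(m_{1}+2,\dots,m_{l+1}+2)$. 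Thus the $\sha(2)$-vs-$\dagger$ incompatibility you identified is not resolved by an abstract rewriting lemma but is absorbed into this explicit Hoffman-relation calculation at $t=2$.
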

\begin{rem}
 The case $l=0$ in Theorem \ref{main} gives Theorem \ref{HMOS}.
\end{rem}

\section{Proof of Theorem \ref{main}}
In the following, everything defined for indices, such as $\zeta$, will be extended $\mathbb{Q}$-linearly. 
We denote by $(k) \hast \boldsymbol{l}$ the formal sum of indices found by adding $k$ to each element of $\boldsymbol{l}$. 
For example, if $\boldsymbol{l}=(l_1,\dots,l_{r'})$, we have 
\[
 (k) \hast \boldsymbol{l}=(l_{1}+k,l_{2},\dots,l_{r'})+\cdots+(l_{1},\dots,l_{r'-1},l_{r'}+k).
\]
Note that, throughout this paper, we often use the special case of  the harmonic product formula  $\zeta(k)\zeta(\boldsymbol{l})=\zeta((k) \ast \boldsymbol{l})$, 
where
\begin{align*}
 (k) \ast \boldsymbol{l}
 =(k) \sha \boldsymbol{l} +(k) \hast \boldsymbol{l}.
\end{align*}

For an admissible index $\boldsymbol{k}$, integers $s,t\ge2$, and nonnegative integers $m,l$, we define
\begin{align*}
 F_{m,l}(s;\boldsymbol{k})
 &:=\mathcal{O}_m((s) \sha \boldsymbol{k} \sha (\{2\}^l) )-\mathcal{O}_m((s) \sha (\boldsymbol{k} \sha (\{2\}^l))^\dagger),\\
 D_{m,l}(s,t)
 &:=F_{m,l}(s;(t+1) )-F_{m,l}(t;(s+1) ). 
\end{align*}
Note that Theorem \ref{main} is equivalent to $D_{m,l}(s,t)=0$ for any $s,t\ge2$ and $m,l\ge0$. 
The main strategy of the proof is similar to \cite{HMOS20}, however, some calculation methods are modified to bear a little more complicated calculations. 
Especially, to show $D_{m,l}(s,2)=0$ in the last part of the proof, we use the Hoffman relation (see Theorem \ref{hof}). 
The proof goes through the following steps: 
\begin{align*}
\begin{array}{lcr}
\begin{array}{lcr}
  &\textrm{Lemmas \ref{Fmpre1} and \ref{Fmpre2}} \Rightarrow \textrm{Lemma \ref{Fm}} \\
  &\textrm{Lemma \ref{oooo}}  
\end{array}
 &\Bigr\} \Rightarrow \textrm{Lemma \ref{dddd}} \\ 
 &\textrm{Theorem \ref{hof}} 
\end{array}
 \Biggr\} \Rightarrow \textrm{Theorem \ref{main}}.
\end{align*}
In the following, we omit the range of $m$'s in the summation if they are integer of $0$ or more, 
for example, we often write $\sum_{\substack{ m_1+\cdots+m_r=m \\ m_1,\dots,m_r\ge0 }}$ as $\sum_{m_1+\cdots+m_r=m}$.

\begin{lem} \label{Fmpre1} 
 For integers $s\ge2$, $t\ge1$, and $m,l\ge0$, we have
 \begin{align*} 
  F_{m,l}(s;(t+1)) 
  &=-\sum_{m_1+m_2=m} \sum_{|\boldsymbol{e}|=m_2} 
   \zeta((s+m_1) \hast ( ((t+1)\sha(\{2\}^l)) \oplus \boldsymbol{e})) \\
  &\quad +\sum_{m_1+m_2=m} \sum_{|\boldsymbol{e}|=m_2} 
   \zeta((s+m_1) \hast ( ((t+1) \sha(\{2\}^l) )^\dagger \oplus \boldsymbol{e})).
 \end{align*}
\end{lem}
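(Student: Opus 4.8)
The plan is to expand both $\mathcal{O}_m$-terms defining $F_{m,l}(s;(t+1))$ using the harmonic product identity recalled just before the lemma, namely $\zeta(s)\zeta(\boldsymbol{l}) = \zeta((s)\ast\boldsymbol{l}) = \zeta((s)\sha\boldsymbol{l}) + \zeta((s)\hast\boldsymbol{l})$, applied not to $\zeta(s)$ alone but to the Ohno-shifted version. First I would unfold the definition: $\mathcal{O}_m((s)\sha\boldsymbol{k}\sha(\{2\}^l)) = \sum_{|\boldsymbol{e}|=m}\zeta\bigl(((s)\sha\boldsymbol{k}\sha(\{2\}^l))\oplus\boldsymbol{e}\bigr)$, and similarly for the dual term with $\boldsymbol{k}=(t+1)$. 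The key structural observation is that the shuffle $(s)\sha\boldsymbol{l}$, after adding a sequence $\boldsymbol{e}$ of nonnegative integers, can be re-summed: writing $\boldsymbol{e}=(m_1,\boldsymbol{e}')$ according to how much mass lands on the inserted letter $s$ versus the rest, one gets $\sum_{|\boldsymbol{e}|=m}\zeta\bigl(((s)\sha\boldsymbol{l})\oplus\boldsymbol{e}\bigr) = \sum_{m_1+m_2=m}\sum_{|\boldsymbol{e}'|=m_2}\zeta(s+m_1)\cdot(\text{shuffle-type sum over }\boldsymbol{l}\oplus\boldsymbol{e}')$ — but that is not quite right because the letter $s$ is shuffled into $\boldsymbol{l}$, so I would instead argue more carefully that $\mathcal{O}_m((s)\sha\boldsymbol{l})$ equals $\sum_{m_1+m_2=m}\sum_{|\boldsymbol{e}|=m_2}\bigl(\zeta(s+m_1)\mathcal{O}_{?}(\boldsymbol{l})\bigr)$-type expression via the harmonic relation in the form $(s+m_1)\ast(\boldsymbol{l}\oplus\boldsymbol{e}) = (s+m_1)\sha(\boldsymbol{l}\oplus\boldsymbol{e}) + (s+m_1)\hast(\boldsymbol{l}\oplus\boldsymbol{e})$.

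Concretely, the steps I would carry out are: (1) use the harmonic product $\zeta(s+m_1)\zeta(\boldsymbol{l}\oplus\boldsymbol{e}) = \zeta((s+m_1)\sha(\boldsymbol{l}\oplus\boldsymbol{e})) + \zeta((s+m_1)\hast(\boldsymbol{l}\oplus\boldsymbol{e}))$ to solve for the shuffle piece $\zeta((s+m_1)\sha(\boldsymbol{l}\oplus\boldsymbol{e})) = \zeta(s+m_1)\zeta(\boldsymbol{l}\oplus\boldsymbol{e}) - \zeta((s+m_1)\hast(\boldsymbol{l}\oplus\boldsymbol{e}))$; (2) sum over $|\boldsymbol{e}|=m_2$ and over $m_1+m_2=m$, observing that the Ohno operator interacts with the extra variable $m_1$ exactly so that $\sum_{m_1+m_2=m}\sum_{|\boldsymbol{e}|=m_2}\zeta((s+m_1)\sha(\boldsymbol{l}\oplus\boldsymbol{e})) = \mathcal{O}_m((s)\sha\boldsymbol{l})$ (this is the combinatorial heart — distributing the total Ohno weight $m$ between the prepended letter and the rest); (3) do this for both $\boldsymbol{l} = (t+1)\sha(\{2\}^l)$ and $\boldsymbol{l} = ((t+1)\sha(\{2\}^l))^\dagger$ and subtract. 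The crucial cancellation is that the $\zeta(s+m_1)\zeta(\boldsymbol{l}\oplus\boldsymbol{e})$ products survive the subtraction only through $\mathcal{O}(\boldsymbol{l})$ versus $\mathcal{O}(\boldsymbol{l}^\dagger)$ — and here I would invoke the Ohno relation (Theorem \ref{ohno}) at each fixed $m_2$-level, or rather note that $\sum_{m_1}\zeta(s+m_1)X^{m_1}$ times $\mathcal{O}(\boldsymbol{l}) = \sum_{m_1}\zeta(s+m_1)X^{m_1}$ times $\mathcal{O}(\boldsymbol{l}^\dagger)$, so those terms drop out entirely, leaving exactly the two $\hast$-sums with the overall sign as stated.

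The main obstacle I anticipate is step (2): justifying that the double sum $\sum_{m_1+m_2=m}\sum_{|\boldsymbol{e}|=m_2}$ of shuffle-products $(s+m_1)\sha(\boldsymbol{l}\oplus\boldsymbol{e})$ reassembles precisely into $\mathcal{O}_m((s)\sha\boldsymbol{l})$. This requires checking that every index appearing in $((s)\sha\boldsymbol{l})\oplus\boldsymbol{f}$ with $|\boldsymbol{f}|=m$ is counted with the correct multiplicity when one instead first chooses how much weight $m_1$ goes onto the shuffled-in copy of $s$ (turning it into $s+m_1$) and then Ohno-distributes the remaining $m_2$ over $\boldsymbol{l}$ before shuffling in the letter $s+m_1$ — i.e. that the shuffle operation commutes appropriately with componentwise addition restricted to the $\boldsymbol{l}$-part. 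Once this bookkeeping lemma is in place, the rest is the harmonic-relation substitution and the Ohno-relation cancellation, both routine. I would also need to handle the boundary case $t=1$ (allowed in the hypothesis, where $(t+1)=(2)$) uniformly, which the argument does since admissibility of $(t+1)\sha(\{2\}^l)$ and its dual is preserved.
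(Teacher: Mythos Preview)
Your proposal is correct and follows essentially the same route as the paper's proof: both arguments compute $\sum_{m_1+m_2=m}\mathcal{O}_{m_1}(s)\,\mathcal{O}_{m_2}(\boldsymbol{l})$ via the harmonic product $(s+m_1)\ast(\boldsymbol{l}\oplus\boldsymbol{e})=(s+m_1)\sha(\boldsymbol{l}\oplus\boldsymbol{e})+(s+m_1)\hast(\boldsymbol{l}\oplus\boldsymbol{e})$, identify the shuffle part with $\mathcal{O}_m((s)\sha\boldsymbol{l})$ by the bookkeeping you describe in step~(2), do the same for $\boldsymbol{l}^\dagger$, and then invoke the Ohno relation so that the product terms cancel upon subtraction. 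The only cosmetic difference is that the paper runs the computation forward from the product side, whereas you phrase it as solving for the shuffle piece; the content is identical, and your anticipated ``obstacle'' in step~(2) is exactly the identity the paper uses without further comment.
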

\begin{proof}
 By definitions, we have
 \begin{align} \label{AAA}
 \begin{split}
  &\sum_{m_1+m_2=m}\mathcal{O}_{m_1}(s)\mathcal{O}_{m_2}((t+1)\sha(\{2\}^l)) \\
  &=\sum_{m_1+m_2=m} \sum_{|\boldsymbol{e}|=m_2}
   \zeta( (s+m_1)\ast ( ((t+1)\sha(\{2\}^l)) \oplus \boldsymbol{e})) \\
  &=\sum_{m_1+m_2=m} \sum_{|\boldsymbol{e}|=m_2} 
   \zeta((s+m_1) \sha ( ((t+1)\sha(\{2\}^l)) \oplus \boldsymbol{e})) \\
  &\quad +\sum_{m_1+m_2=m} \sum_{|\boldsymbol{e}|=m_2} 
   \zeta((s+m_1) \hast ( ((t+1)\sha(\{2\}^l)) \oplus \boldsymbol{e})) \\
  &=\mathcal{O}_m((s) \sha (t+1) \sha (\{2\}^l)) \\
  &\quad +\sum_{m_1+m_2=m} \sum_{|\boldsymbol{e}|=m_2} 
   \zeta((s+m_1) \hast ( ((t+1)\sha(\{2\}^l)) \oplus \boldsymbol{e})).
 \end{split}
 \end{align}
 We also have
 \begin{align} \label{BBB}
 \begin{split}
  &\sum_{m_1+m_2=m}\mathcal{O}_{m_1}(s)\mathcal{O}_{m_2}((t+1)\sha(\{2\}^l)) \\
  &=\sum_{m_1+m_2=m}\mathcal{O}_{m_1}(s)\mathcal{O}_{m_2}(((t+1)\sha(\{2\}^l))^\dagger) 
   \qquad \textrm{(by Theorem \ref{ohno})} \\
  &=\sum_{m_1+m_2=m}\sum_{|\boldsymbol{e}|=m_2}\zeta((s+m_1) \ast (((t+1)\sha(\{2\}^l))^\dagger \oplus \boldsymbol{e}))\\
  &=\mathcal{O}_m((s) \sha ((t+1) \sha (\{2\}^l))^\dagger ) \\
  &\quad +\sum_{m_1+m_2=m} \sum_{|\boldsymbol{e}|=m_2} 
   \zeta((s+m_1) \hast ( ((t+1) \sha(\{2\}^l) )^\dagger \oplus \boldsymbol{e})).
 \end{split}
 \end{align}
 From \eqref{AAA} and \eqref{BBB}, we have
 \begin{align*}
  &\mathcal{O}_m((s) \sha (t+1) \sha (\{2\}^l)) 
   -\mathcal{O}_m((s) \sha ((t+1) \sha (\{2\}^l))^\dagger ) \\
  &=-\sum_{m_1+m_2=m} \sum_{|\boldsymbol{e}|=m_2} 
   \zeta((s+m_1) \hast ( ((t+1)\sha(\{2\}^l)) \oplus \boldsymbol{e})) \\
  &\quad +\sum_{m_1+m_2=m} \sum_{|\boldsymbol{e}|=m_2} 
   \zeta((s+m_1) \hast ( ((t+1) \sha(\{2\}^l) )^\dagger \oplus \boldsymbol{e})).
 \end{align*}
 By the definition of $F_{m,l}(s;(t+1))$, we obtain the result. 
\end{proof}

\begin{lem} \label{Fmpre2}
 For integers $s,t\ge1$, $m\ge1$, and $l\ge0$, we have
 \begin{align*}
  &\sum_{m_1+m_2=m} \sum_{|\boldsymbol{e}|=m_2}
   \zeta((s+m_1) \hast ( ((t+1)\sha(\{2\}^l)) \oplus \boldsymbol{e})) \\
  &-\sum_{m_1+m_2=m-1} \sum_{|\boldsymbol{e}|=m_2} 
   \zeta((s+m_1+1) \hast ( ((t+1)\sha(\{2\}^l)) \oplus \boldsymbol{e})) \\
  &=\mathcal{O}_{m} ( (s) \hast ( (t+1) \sha (\{2\}^l) ) )
 \end{align*}
 and
 \begin{align*}
  &\sum_{m_1+m_2=m} \sum_{|\boldsymbol{e}|=m_2}
   \zeta((s+m_1) \hast ( ((t+1)\sha(\{2\}^l))^\dagger \oplus \boldsymbol{e}) ) \\
  &-\sum_{m_1+m_2=m-1} \sum_{|\boldsymbol{e}|=m_2} 
   \zeta((s+m_1+1) \hast ( ((t+1)\sha(\{2\}^l))^\dagger \oplus \boldsymbol{e}) ) \\
  &=\mathcal{O}_{m} ( (s) \hast ( (t+1) \sha (\{2\}^l) )^\dagger ). 
 \end{align*} 
\end{lem}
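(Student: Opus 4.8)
The plan is to deduce both displayed identities from the single reindexing formula
\[
 \sum_{m_1+m_2=m} \sum_{|\boldsymbol{e}|=m_2} \zeta((s+m_1)\hast(\boldsymbol{g}\oplus\boldsymbol{e}))
 -\sum_{m_1+m_2=m-1} \sum_{|\boldsymbol{e}|=m_2} \zeta((s+m_1+1)\hast(\boldsymbol{g}\oplus\boldsymbol{e}))
 =\mathcal{O}_m((s)\hast\boldsymbol{g}),
\]
valid for an arbitrary $\mathbb{Q}$-linear combination of indices $\boldsymbol{g}$, and then to specialize $\boldsymbol{g}=(t+1)\sha(\{2\}^{l})$ for the first assertion and $\boldsymbol{g}=((t+1)\sha(\{2\}^{l}))^\dagger$ for the second. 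Since $\zeta$, $\oplus$, $\hast$ and $\mathcal{O}_m$ are all extended $\mathbb{Q}$-linearly, it suffices to establish this formula when $\boldsymbol{g}=(g_1,\dots,g_d)$ is a single index.

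Both sides decompose as a sum over the position $j\in\{1,\dots,d\}$ to which the distinguished entry $s+m_1$ (on the left) or the argument $s$ of $(s)\hast\boldsymbol{g}$ (on the right) is attached, so I would fix $j$ and compare the corresponding pieces. In the first sum on the left, substituting $f_i=e_i$ for $i\ne j$ and $f_j=e_j+m_1$ converts the constraints $m_1+m_2=m$, $|\boldsymbol{e}|=m_2$ into $f_1+\cdots+f_d=m$ together with $0\le m_1\le f_j$, so the $j$-th piece becomes $\sum_{f_1+\cdots+f_d=m}(f_j+1)\,\zeta(g_1+f_1,\dots,g_j+s+f_j,\dots,g_d+f_d)$; in the second sum, the substitution $f_j=e_j+m_1+1$ similarly yields $\sum_{f_1+\cdots+f_d=m}f_j\,\zeta(g_1+f_1,\dots,g_j+s+f_j,\dots,g_d+f_d)$. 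Subtracting, the multiplicities telescope to $1$, and the resulting sum is precisely $\mathcal{O}_m$ of the index $(g_1,\dots,g_j+s,\dots,g_d)$; summing over $j$ gives $\mathcal{O}_m((s)\hast\boldsymbol{g})$, as claimed. This is exactly the structure of the argument in \cite{HMOS20}, now carried out with the extra blocks coming from the $(\{2\}^{l})$ factor.

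The computation is just a reorganization of finite sums, so there is no serious obstacle; the delicate point is the bookkeeping of the two weights $f_j+1$ and $f_j$ produced by the substitutions and the fact that their difference is identically $1$ — in particular one should check the edge case $f_j=0$, where the second piece is empty, which is consistent since $(f_j+1)-f_j=1$ still holds. A minor additional point is that in the dual case the individual indices occurring in $((t+1)\sha(\{2\}^{l}))^\dagger$ need not all have the same depth $d$; this is harmless, since the formula above is applied to each of them separately and the results are summed.
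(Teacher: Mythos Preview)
Your argument is correct and is essentially the same as the paper's: both establish, for a single admissible index $\boldsymbol{g}$ and a fixed position, the telescoping identity that the difference of the two double sums collapses to $\mathcal{O}_m$, then sum over positions to obtain $\mathcal{O}_m((s)\hast\boldsymbol{g})$ and extend $\mathbb{Q}$-linearly to the required $\boldsymbol{g}$. The only cosmetic difference is that you spell out the substitution $f_j=e_j+m_1$ and the resulting multiplicities $f_j+1$ and $f_j$, whereas the paper simply asserts the per-position identity; your remark about varying depths in the dual case is in fact unnecessary here, since each summand of $((t+1)\sha(\{2\}^l))^\dagger$ has the same depth $l+t$.
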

\begin{proof}
 Since
 \begin{align*} 
  \begin{split} 
  &\sum_{m_1+m_2=m} \sum_{|\boldsymbol{e}|=m_2}
   \zeta ((l_{1},\dots,l_{i-1},l_i+m_1,l_{i+1},\dots,l_{r'}) \oplus \boldsymbol{e}) \\
  &-\sum_{m_1+m_2=m-1} \sum_{|\boldsymbol{e}|=m_2}
   \zeta ( (l_{1},\dots,l_{i-1},l_i+m_1+1,l_{i+1},\dots,l_{r'}) \oplus \boldsymbol{e}) \\
  &=\mathcal{O}_{m} (l_{1},\dots,l_{r'})
  \end{split}
 \end{align*}
 holds for an admissible index $(l_{1},\dots,l_{r'})\in\mathcal{I}$ and positive integers $m,i$ with $1\le i\le r'$,
 we have
 \begin{align*}
  &\sum_{m_1+m_2=m} \sum_{|\boldsymbol{e}|=m_2}
   \zeta((k+m_1) \hast ( (l_{1},\dots,l_{r'}) \oplus \boldsymbol{e})) \\
  &-\sum_{m_1+m_2=m-1} \sum_{|\boldsymbol{e}|=m_2} 
   \zeta((k+m_1+1) \hast ( (l_{1},\dots,l_{r'}) \oplus \boldsymbol{e})) \\
  &=\mathcal{O}_{m} ( (k) \hast (l_{1},\dots,l_{r'}) ) 
 \end{align*}
 for a nonnegative integer $k$.
 By using this equality, we obtain the first equality. 
 The second equality can be proved in the same way. 
\end{proof}

\begin{lem} \label{Fm}
 For integers $s\ge3$, $t\ge1$, $m\ge1$, and $l\ge0$, we have
 \begin{align*}
  &F_{m,l}(s-1;(t+1)) -F_{m-1,l}(s;(t+1)) \\
  &=-\mathcal{O}_{m} ( (s-1) \hast ( (t+1) \sha (\{2\}^l) ) ) 
   +\mathcal{O}_{m} ( (s-1) \hast ( (t+1) \sha (\{2\}^l) )^\dagger ).
 \end{align*}
\end{lem}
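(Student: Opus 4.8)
The plan is to combine Lemmas \ref{Fmpre1} and \ref{Fmpre2} after a single shift of a summation index; no genuinely new input is required. First I would apply Lemma \ref{Fmpre1} (legitimate since $s-1\ge2$) to write $F_{m,l}(s-1;(t+1))=-A_m+B_m$, where
\[
 A_m=\sum_{m_1+m_2=m}\sum_{|\boldsymbol{e}|=m_2}\zeta\bigl((s-1+m_1)\hast(((t+1)\sha(\{2\}^l))\oplus\boldsymbol{e})\bigr)
\]
and $B_m$ is the same double sum with $(t+1)\sha(\{2\}^l)$ replaced by its dual index. Applying Lemma \ref{Fmpre1} once more to $F_{m-1,l}(s;(t+1))$ and rewriting $s+m_1=(s-1)+(m_1+1)$, the two resulting double sums are exactly the "shifted" sums $A'_{m-1}$ and $B'_{m-1}$ obtained from $A_m,B_m$ by replacing $\sum_{m_1+m_2=m}$ with $\sum_{m_1+m_2=m-1}$ and $s-1+m_1$ with $s-1+m_1+1$. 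Hence
\[
 F_{m,l}(s-1;(t+1))-F_{m-1,l}(s;(t+1))=-(A_m-A'_{m-1})+(B_m-B'_{m-1}).
\]

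Next I would invoke Lemma \ref{Fmpre2} with its first parameter taken to be $s-1$ (valid since $s-1\ge1$ and $m\ge1$): its first identity gives $A_m-A'_{m-1}=\mathcal{O}_m((s-1)\hast((t+1)\sha(\{2\}^l)))$, and its second identity gives $B_m-B'_{m-1}=\mathcal{O}_m((s-1)\hast((t+1)\sha(\{2\}^l))^\dagger)$. Substituting these into the displayed equation yields precisely the asserted formula.

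The only point requiring care — and the main (mild) obstacle — is the bookkeeping of the summation ranges under the shift $m_1\mapsto m_1+1$: one must verify that the double sum over $m_1+m_2=m-1$ coming from $F_{m-1,l}(s;(t+1))$ matches term by term the shifted sum appearing in Lemma \ref{Fmpre2}, and that the hypotheses needed for the two lemmas ($s-1\ge2$ and $t\ge1$ for Lemma \ref{Fmpre1}; $s-1\ge1$, $t\ge1$, $m\ge1$ for Lemma \ref{Fmpre2}) are all implied by the stated ranges $s\ge3$, $t\ge1$, $m\ge1$, $l\ge0$. Everything else is a direct substitution, so the lemma follows formally from the two preceding ones.
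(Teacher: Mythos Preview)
Your proposal is correct and follows essentially the same approach as the paper: apply Lemma~\ref{Fmpre1} to both $F_{m,l}(s-1;(t+1))$ and $F_{m-1,l}(s;(t+1))$, subtract, and then identify each of the two resulting differences of double sums with the corresponding $\mathcal{O}_m$-term via Lemma~\ref{Fmpre2} (with first parameter $s-1$). The paper's write-up is terser, but the logical content and the index-shift bookkeeping you describe match exactly.
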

\begin{proof}
 By Lemmas \ref{Fmpre1} and \ref{Fmpre2}, we have
 \begin{align*}
  &F_{m,l}(s-1;(t+1)) -F_{m-1,l}(s;(t+1)) \\
  &=-\Biggl( \sum_{m_1+m_2=m} \sum_{|\boldsymbol{e}|=m_2} 
   \zeta((s+m_1-1) \hast ( ((t+1)\sha(\{2\}^l)) \oplus \boldsymbol{e})) \\
  &\qquad\quad -\sum_{m_1+m_2=m-1} \sum_{|\boldsymbol{e}|=m_2} 
   \zeta((s+m_1) \hast ( ((t+1)\sha(\{2\}^l)) \oplus \boldsymbol{e})) \Biggr) \\
  &\quad +\Biggl( \sum_{m_1+m_2=m} \sum_{|\boldsymbol{e}|=m_2} 
   \zeta((s+m_1-1) \hast ( ((t+1) \sha(\{2\}^l) )^\dagger \oplus \boldsymbol{e})) \\
  &\qquad\quad -\sum_{m_1+m_2=m-1} \sum_{|\boldsymbol{e}|=m_2} 
   \zeta((s+m_1) \hast ( ((t+1) \sha(\{2\}^l) )^\dagger \oplus \boldsymbol{e})) \Biggr) \\
  &=-\mathcal{O}_{m} ( (s-1) \hast ( (t+1) \sha (\{2\}^l) ) ) 
   +\mathcal{O}_{m} ( (s-1) \hast ( (t+1) \sha (\{2\}^l) )^\dagger ).
  \end{align*}
 This finishes the proof. 
\end{proof}

\begin{lem} \label{oooo}
 For integers $s,t\ge3$ and $l,m\ge0$, we have 
 \begin{align*}
  &\mathcal{O}_{m} ( (s) \sha ( (t) \sha (\{2\}^l) )^\dagger ) 
  -\mathcal{O}_{m} ( (s-1) \hast ( (t+1) \sha (\{2\}^l) )^\dagger ) \\
  &-\bigl( \mathcal{O}_{m} ( (t) \sha ( (s) \sha (\{2\}^l) )^\dagger ) 
   -\mathcal{O}_{m} ( (t-1) \hast ( (s+1) \sha (\{2\}^l) )^\dagger ) \bigr) 
  =0.  
 \end{align*}
\end{lem}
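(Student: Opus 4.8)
The plan is to reduce the statement to an identity between two formal $\mathbb{Q}$-linear combinations of admissible indices that are dual to one another, and then to invoke the Ohno relation (Theorem~\ref{ohno}) coefficientwise in $X$. Write $\boldsymbol{k}_t=(t)\sha(\{2\}^l)$; since $(\{2\}^l)^\dagger=(\{2\}^l)$, the blockwise description of $\dagger$ yields the explicit formula
\[
 \boldsymbol{k}_t^\dagger=\sum_{j=0}^{l}\Bigl(\{2\}^j,\underbrace{1,\ldots,1}_{t-2},2,\{2\}^{l-j}\Bigr),
\]
and likewise $\boldsymbol{k}_{t+1}^\dagger=\sum_{j=0}^{l}(\{2\}^j,\underbrace{1,\ldots,1}_{t-1},2,\{2\}^{l-j})$. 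Set $P(s,t):=(s)\sha\boldsymbol{k}_t^\dagger-(s-1)\hast\boldsymbol{k}_{t+1}^\dagger\in\mathcal{I}$, so that the first parenthesis in Lemma~\ref{oooo} equals $\mathcal{O}_m(P(s,t))$ and the whole claim becomes $\mathcal{O}_m(P(s,t))=\mathcal{O}_m(P(t,s))$. Every index occurring in $P(s,t)$ is admissible, and $\mathcal{O}_m$ (extended $\mathbb{Q}$-linearly) is unchanged by replacing an admissible index by its dual, by Theorem~\ref{ohno}; hence it suffices to prove the purely combinatorial identity $P(s,t)^\dagger=P(t,s)$ in $\mathcal{I}$.

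For this I would first expand $(s)\sha\boldsymbol{k}_t^\dagger$ (insert a new part $s$ in every gap of every summand of $\boldsymbol{k}_t^\dagger$) and $(s-1)\hast\boldsymbol{k}_{t+1}^\dagger$ (add $s-1$ to every part of every summand of $\boldsymbol{k}_{t+1}^\dagger$) as explicit index sums. Because $\boldsymbol{k}_{t+1}^\dagger$ is obtained from $\boldsymbol{k}_t^\dagger$ by inserting one extra $1$ into the run of $1$'s in each summand, there is a large cancellation: inserting $s$ among those $1$'s produces exactly the indices obtained by adding $s-1$ to one of the $1$'s of $\boldsymbol{k}_{t+1}^\dagger$. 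What remains is, for each $j$, four explicit families: $s$ inserted into the leading $\{2\}^j$; $s$ inserted into the trailing $2,\{2\}^{l-j}$; minus $s-1$ added to one of the $j$ twos in the leading block of $\boldsymbol{k}_{t+1}^\dagger$; and minus $s-1$ added to one of the $l-j+1$ twos in the trailing block of $\boldsymbol{k}_{t+1}^\dagger$. A short check shows these $2(l+1)^2$ indices are pairwise distinct, so $P(s,t)$ is their signed sum with coefficients $\pm1$.

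Finally I would apply the definition of $\dagger$ blockwise to a generic term of each of the four families. After reindexing the summation parameters --- by maps such as $(j',p')=(l-p,\,l-j)$ and $(j',r')=(l-j-r,\,r)$ --- one finds that $\dagger$ sends each family bijectively onto the corresponding family in the $s\leftrightarrow t$ swapped sum $P(t,s)$, with the sign preserved; a representative instance is
\[
 \Bigl(\{2\}^j,\underbrace{1,\ldots,1}_{t-2},2,\{2\}^r,s,\{2\}^{l-j-r}\Bigr)^{\!\dagger}
 =\Bigl(\{2\}^{l-j-r},\underbrace{1,\ldots,1}_{s-2},2,\{2\}^r,t,\{2\}^{j}\Bigr).
\]
Summing over the four families gives $P(s,t)^\dagger=P(t,s)$, and then applying $\mathcal{O}$ and reading off the coefficient of $X^m$ yields Lemma~\ref{oooo}. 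The one genuinely laborious point is the middle step: pinning down the cancellation precisely and checking that the four survivor families pair up correctly with their duals. This is the same bookkeeping as in the $l=0$ case of \cite{HMOS20}, only made somewhat heavier by the $\{2\}^l$-padding, and I do not expect any conceptual obstacle beyond it.
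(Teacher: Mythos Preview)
Your proposal is correct and follows essentially the same route as the paper: both expand $(s)\sha\boldsymbol{k}_t^\dagger$ and $(s-1)\hast\boldsymbol{k}_{t+1}^\dagger$ into explicit families of indices, observe that the terms with $s$ sitting inside the run of $1$'s cancel against the terms where $s-1$ is added to a $1$, and then match the surviving families under $\dagger$ via the Ohno relation. The only difference is packaging---you phrase the endgame as the single identity $P(s,t)^\dagger=P(t,s)$ in $\mathcal{I}$, whereas the paper writes out both sides and says ``by Theorem~\ref{ohno}''---but the underlying computation is identical.
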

\begin{proof}
 We shall show this lemma only for $l\ge1$. 
 The case $l=0$ can be shown similarly. 
 We have
 \begin{align*}
  &(s) \sha ( (t) \sha (\{2\}^l) )^\dagger 
  =\sum_{i=0}^{l} (s) \sha (\{2\}^{i},t,\{2\}^{l-i})^\dagger \\
  &=\sum_{i=0}^{l} (s) \sha (\{2\}^{i},\{1\}^{t-2},\{2\}^{l-i+1}) \\
  &=\sum_{i=0}^{l} 
   \Biggl( 
    \sum_{j=0}^{i} (\{2\}^{j},s,\{2\}^{i-j},\{1\}^{t-2},\{2\}^{l-i+1}) 
    +\sum_{j=1}^{t-2} (\{2\}^{i},\{1\}^{j},s,\{1\}^{t-j-2},\{2\}^{l-i+1}) \\
    &\qquad\quad +\sum_{j=0}^{l-i} (\{2\}^{i},\{1\}^{t-2},\{2\}^{j+1},s,\{2\}^{l-i-j}) 
   \Biggr)
 \end{align*}
 and
 \begin{align*}
  &(s-1) \hast ( (t+1) \sha (\{2\}^l) )^\dagger 
  =\sum_{i=0}^{l} (s-1) \hast (\{2\}^{i},t+1,\{2\}^{l-i})^\dagger \\
  &=\sum_{i=0}^{l} (s-1) \hast (\{2\}^{i},\{1\}^{t-1},\{2\}^{l-i+1}) \\
  &=\sum_{i=1}^{l} \sum_{j=0}^{i-1} (\{2\}^{j},s+1,\{2\}^{i-j-1},\{1\}^{t-1},\{2\}^{l-i+1}) \\
  &\quad +\sum_{i=0}^{l} 
   \Biggl( 
    \sum_{j=0}^{t-2} (\{2\}^{i},\{1\}^{j},s,\{1\}^{t-j-2},\{2\}^{l-i+1}) 
    +\sum_{j=0}^{l-i} (\{2\}^{i},\{1\}^{t-1},\{2\}^{j},s+1,\{2\}^{l-i-j}) 
   \Biggr).
 \end{align*}
 Thus, we have
 \begin{align*}
  &\mathcal{O}_{m} ( (s) \sha ( (t) \sha (\{2\}^l) )^\dagger ) 
  -\mathcal{O}_{m} ( (s-1) \hast ( (t+1) \sha (\{2\}^l) )^\dagger ) \\
  &=\sum_{i=0}^{l} 
   \Biggl( 
    \sum_{j=0}^{i} \mathcal{O}_{m} (\{2\}^{j},s,\{2\}^{i-j},\{1\}^{t-2},\{2\}^{l-i+1}) 
     +\sum_{j=0}^{l-i} \mathcal{O}_{m} (\{2\}^{i},\{1\}^{t-2},\{2\}^{j+1},s,\{2\}^{l-i-j}) \\
    &\qquad\quad -\sum_{j=0}^{l-i} \mathcal{O}_{m} (\{2\}^{i},\{1\}^{t-1},\{2\}^{j},s+1,\{2\}^{l-i-j}) 
     -\mathcal{O}_{m} (\{2\}^{i},s,\{1\}^{t-2},\{2\}^{l-i+1})
   \Biggr) \\
    &\quad -\sum_{i=1}^{l} \sum_{j=0}^{i-1} \mathcal{O}_{m} (\{2\}^{j},s+1,\{2\}^{i-j-1},\{1\}^{t-1},\{2\}^{l-i+1}) \\
  &=\sum_{\substack{ a+b+c=l \\ a,b,c\ge0 }} 
   \bigl( 
    \mathcal{O}_{m} (\{2\}^{a},s,\{2\}^{b},\{1\}^{t-2},\{2\}^{c+1}) 
    +\mathcal{O}_{m} (\{2\}^{a},\{1\}^{t-2},\{2\}^{b+1},s,\{2\}^{c}) \\
   &\qquad\qquad -\mathcal{O}_{m} (\{2\}^{a},\{1\}^{t-1},\{2\}^{b},s+1,\{2\}^{c})
   \bigr) \\
  &\quad -\sum_{\substack{ a+b=l \\ a,b\ge0 }} 
   \mathcal{O}_{m} (\{2\}^{a},s,\{1\}^{t-2},\{2\}^{b+1}) 
  -\sum_{\substack{ a+b+c=l-1 \\ a,b,c\ge0 }} 
   \mathcal{O}_{m} (\{2\}^{a},s+1,\{2\}^{b},\{1\}^{t-1},\{2\}^{c+1}).
 \end{align*}
 Similarly, we also have
 \begin{align*}
  &\mathcal{O}_{m} ( (t) \sha ( (s) \sha (\{2\}^l) )^\dagger ) 
  -\mathcal{O}_{m} ( (t-1) \hast ( (s+1) \sha (\{2\}^l) )^\dagger ) \\
  &=\sum_{\substack{ a+b+c=l \\ a,b,c\ge0 }} 
   \bigl( 
    \mathcal{O}_{m} (\{2\}^{a},t,\{2\}^{b},\{1\}^{s-2},\{2\}^{c+1}) 
    +\mathcal{O}_{m} (\{2\}^{a},\{1\}^{s-2},\{2\}^{b+1},t,\{2\}^{c}) \\
   &\qquad\qquad -\mathcal{O}_{m} (\{2\}^{a},\{1\}^{s-1},\{2\}^{b},t+1,\{2\}^{c})
   \bigr) \\
  &\quad -\sum_{\substack{ a+b=l \\ a,b\ge0 }} 
   \mathcal{O}_{m} (\{2\}^{a},t,\{1\}^{s-2},\{2\}^{b+1}) 
  -\sum_{\substack{ a+b+c=l-1 \\ a,b,c\ge0 }} 
   \mathcal{O}_{m} (\{2\}^{a},t+1,\{2\}^{b},\{1\}^{s-1},\{2\}^{c+1}).
 \end{align*}
 Then, by using Theorem \ref{ohno}, we find the result. 
\end{proof}

\begin{lem} \label{dddd}
 For integers $s,t\ge3$ and $m\ge1$, we have
 \begin{align*}
  D_{m-1,l}(s,t)=D_{m,l}(s-1,t)+D_{m,l}(s,t-1).
 \end{align*}
\end{lem}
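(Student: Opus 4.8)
The plan is to unfold both sides of the asserted identity in terms of the auxiliary quantities $F_{\bullet,l}(\bullet;\bullet)$ and then to reduce everything to Lemmas \ref{Fm} and \ref{oooo}, together with one elementary symmetry of formal sums of indices.

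First I would use the definition of $D_{m,l}$ to rewrite
\begin{align*}
 &D_{m-1,l}(s,t)-D_{m,l}(s-1,t)-D_{m,l}(s,t-1)\\
 &=\bigl(F_{m-1,l}(s;(t+1))-F_{m,l}(s-1;(t+1))\bigr)
  -\bigl(F_{m-1,l}(t;(s+1))-F_{m,l}(t-1;(s+1))\bigr)\\
 &\quad+\bigl(F_{m,l}(t;(s))-F_{m,l}(s;(t))\bigr),
\end{align*}
where the grouping is arranged so that the first two brackets are exactly of the shape handled by Lemma \ref{Fm} (the second one after interchanging the roles of $s$ and $t$, which is legitimate because $s,t\ge3$ and $m\ge1$). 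Applying Lemma \ref{Fm} to those two brackets turns the first line into
\begin{align*}
 &\mathcal{O}_m\bigl((s-1)\hast((t+1)\sha(\{2\}^l))\bigr)-\mathcal{O}_m\bigl((s-1)\hast((t+1)\sha(\{2\}^l))^\dagger\bigr)\\
 &\quad-\mathcal{O}_m\bigl((t-1)\hast((s+1)\sha(\{2\}^l))\bigr)+\mathcal{O}_m\bigl((t-1)\hast((s+1)\sha(\{2\}^l))^\dagger\bigr).
\end{align*}
For the remaining bracket I would use that the shuffle product $\sha$ is commutative, so that the two "non-dual" contributions $\mathcal{O}_m((s)\sha(t)\sha(\{2\}^l))$ and $\mathcal{O}_m((t)\sha(s)\sha(\{2\}^l))$ inside $F_{m,l}(s;(t))$ and $F_{m,l}(t;(s))$ coincide and cancel; what survives is $\mathcal{O}_m((s)\sha((t)\sha(\{2\}^l))^\dagger)-\mathcal{O}_m((t)\sha((s)\sha(\{2\}^l))^\dagger)$.

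Collecting the three contributions, the four summands carrying a $\dagger$ are precisely the left-hand side of Lemma \ref{oooo}, hence they vanish, leaving
\begin{align*}
 &D_{m-1,l}(s,t)-D_{m,l}(s-1,t)-D_{m,l}(s,t-1)\\
 &=\mathcal{O}_m\bigl((s-1)\hast((t+1)\sha(\{2\}^l))\bigr)-\mathcal{O}_m\bigl((t-1)\hast((s+1)\sha(\{2\}^l))\bigr).
\end{align*}
So it remains to verify that the two arguments agree as $\mathbb{Q}$-linear combinations of indices. Here I would use the expansion $(t+1)\sha(\{2\}^l)=\sum_{i=0}^{l}(\{2\}^{i},t+1,\{2\}^{l-i})$ (all coefficients $1$, by a one-line induction on $l$ using that $(\{2\}^l)$ has equal entries), apply $(s-1)\hast$, and sort the resulting indices according to whether $s-1$ is added to the distinguished entry $t+1$ or to one of the $2$'s. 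The first case contributes $\sum_{i=0}^{l}(\{2\}^{i},s+t,\{2\}^{l-i})$ and the second contributes $\sum_{p+q+r=l-1}\bigl((\{2\}^{p},s+1,\{2\}^{q},t+1,\{2\}^{r})+(\{2\}^{p},t+1,\{2\}^{q},s+1,\{2\}^{r})\bigr)$; both are visibly symmetric in $s\leftrightarrow t$, so $(s-1)\hast((t+1)\sha(\{2\}^l))=(t-1)\hast((s+1)\sha(\{2\}^l))$ and the displayed difference is $0$. This gives $D_{m-1,l}(s,t)=D_{m,l}(s-1,t)+D_{m,l}(s,t-1)$.

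I expect the only genuine obstacle to be organisational: choosing the grouping above so that Lemma \ref{Fm} applies verbatim, and checking that the $\dagger$-terms surviving that step assemble into exactly the combination shown to be zero in Lemma \ref{oooo}. Once that bookkeeping is done, the concluding $s\leftrightarrow t$ symmetry of $(s-1)\hast((t+1)\sha(\{2\}^l))$ is a short direct computation, and, together with the Hoffman relation (Theorem \ref{hof}), this recursion in $s+t$ then yields Theorem \ref{main} as indicated in the roadmap.
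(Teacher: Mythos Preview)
Your proposal is correct and follows essentially the same route as the paper: the same regrouping of the six $F$-terms, the same application of Lemma~\ref{Fm} to two of the three brackets, the same use of commutativity of $\sha$ to cancel the non-dual parts of $F_{m,l}(t;(s))-F_{m,l}(s;(t))$, Lemma~\ref{oooo} for the four $\dagger$-terms, and the $s\leftrightarrow t$ symmetry of $(s-1)\hast((t+1)\sha(\{2\}^l))$ for the remaining two. The only cosmetic difference is that the paper packages that last symmetry as the identity $(s-1)\hast((t+1)\sha(\{2\}^l))=(s+t)\sha(\{2\}^l)+(s+1)\sha(t+1)\sha(\{2\}^{l-1})$, which is exactly your expanded sum rewritten with the shuffle product.
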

\begin{proof} 
 By definitions, we have
 \begin{align*}
  &D_{m-1,l}(s,t) -D_{m,l}(s-1,t) -D_{m,l}(s,t-1) \\
  &=F_{m-1,l}(s;(t+1))-F_{m-1,l}(t;(s+1)) \\
   &\quad -(F_{m,l}(s-1;(t+1))-F_{m,l}(t;(s)) ) 
    -(F_{m,l}(s;(t) )-F_{m,l}(t-1;(s+1)) ) \\
  &=-( F_{m,l}(s-1;(t+1)) -F_{m-1,l}(s;(t+1)) ) 
   +( F_{m,l}(t-1;(s+1)) -F_{m-1,l}(t;(s+1)) ) \\ 
   &\quad +F_{m,l}(t;(s)) -F_{m,l}(s;(t)).
 \end{align*}
 Then, by Lemma \ref{Fm}, we have 
 \begin{align*}
  &D_{m-1,l}(s,t) -D_{m,l}(s-1,t) -D_{m,l}(s,t-1) \\
  &=\mathcal{O}_{m} ( (s-1) \hast ( (t+1) \sha (\{2\}^l) ) ) 
    -\mathcal{O}_{m} ( (s-1) \hast ( (t+1) \sha (\{2\}^l) )^\dagger ) \\
   &\quad -\mathcal{O}_{m} ( (t-1) \hast ( (s+1) \sha (\{2\}^l) ) ) 
    +\mathcal{O}_{m} ( (t-1) \hast ( (s+1) \sha (\{2\}^l) )^\dagger ) \\
   &\quad +\mathcal{O}_{m} ( (s) \sha ( (t) \sha (\{2\}^l) )^\dagger ) 
    -\mathcal{O}_{m} ( (t) \sha ( (s) \sha (\{2\}^l) )^\dagger ).  
 \end{align*}
 Since
 \begin{align*}
  &\mathcal{O}_{m} ( (s-1) \hast ( (t+1) \sha (\{2\}^l) ) )
  -\mathcal{O}_{m} ( (t-1) \hast ( (s+1) \sha (\{2\}^l) ) ) \\
  &=\mathcal{O}_{m} ( (s+t) \sha (\{2\}^l) +(s+1) \sha (t+1) \sha (\{2\}^{l-1}) ) \\
   &\quad -\mathcal{O}_{m} ( (s+t) \sha (\{2\}^l) +(s+1) \sha (t+1) \sha (\{2\}^{l-1}) )
   =0
 \end{align*}
 and by Lemma \ref{oooo}, we obtain the result. 
\end{proof}

In the proof of Theorem \ref{main}, we use the following theorem. 
\begin{thm}[Hoffman {\cite[Theorem 5.1]{Hof92}}] \label{hof}
 For positive integers $k_{1},\dots,k_{r}$ with $k_r\ge2$, we have
 \begin{align*}
  &\sum_{i=1}^{r} \zeta(k_{1},\dots,k_{i-1},k_i+1,k_{i+1},\dots,k_{r}) \\
  &=\sum_{\substack{ 1\le i\le r \\ k_i\ge2 }} \sum_{j=0}^{k_i-2} 
   \zeta(k_{1},\dots,k_{i-1},j+1,k_i-j,k_{i+1},\dots,k_{r}).
 \end{align*}
\end{thm}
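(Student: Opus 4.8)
The plan is to give a self-contained derivation of Theorem~\ref{hof} from the Ohno relation (Theorem~\ref{ohno}) together with its $m=0$ instance, the duality $\zeta(\boldsymbol{k})=\zeta(\boldsymbol{k}^\dagger)$. For an admissible $\boldsymbol{k}=(k_1,\dots,k_r)$ write
\[
 A(\boldsymbol{k}):=\sum_{i=1}^{r}\zeta(k_1,\dots,k_i+1,\dots,k_r),
 \qquad
 B(\boldsymbol{k}):=\sum_{\substack{1\le i\le r\\ k_i\ge2}}\sum_{j=0}^{k_i-2}
  \zeta(k_1,\dots,k_{i-1},j+1,k_i-j,k_{i+1},\dots,k_r),
\]
so that the assertion is exactly $A(\boldsymbol{k})=B(\boldsymbol{k})$. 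Since the sequences $\boldsymbol{e}$ with $|\boldsymbol{e}|=1$ are precisely the unit vectors, one has $A(\boldsymbol{k})=\mathcal{O}_1(\boldsymbol{k})$; hence Theorem~\ref{ohno}, compared coefficientwise in $X$, gives at once
\[
 A(\boldsymbol{k})=\mathcal{O}_1(\boldsymbol{k})=\mathcal{O}_1(\boldsymbol{k}^\dagger)=A(\boldsymbol{k}^\dagger).
\]
It therefore suffices to prove the single identity $A(\boldsymbol{k}^\dagger)=B(\boldsymbol{k})$, because then $A(\boldsymbol{k})=A(\boldsymbol{k}^\dagger)=B(\boldsymbol{k})$.

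To prove $A(\boldsymbol{k}^\dagger)=B(\boldsymbol{k})$ I would use duality through a combinatorial bijection. Encode an index by the word $(k_1,\dots,k_r)\mapsto yx^{k_1-1}yx^{k_2-1}\cdots yx^{k_r-1}$ in two letters $x,y$; with this convention the dual $\boldsymbol{k}\mapsto\boldsymbol{k}^\dagger$ is realised by reversing the word and interchanging $x\leftrightarrow y$ (one checks that this reproduces exactly the block formula for $\boldsymbol{k}^\dagger$ given in the introduction). In these terms the two operations occurring in $A$ and $B$ read as follows: adding $1$ to a part inserts one extra $x$ directly after the $y$ that opens the corresponding block, whereas splitting a part $k_i\ge2$ into $(j+1,k_i-j)$ inserts one extra $y$ directly before one of the letters $x$. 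Reversing a word turns ``directly after a $y$'' into ``directly before an $x$'', and the interchange of letters turns the inserted $x$ into an inserted $y$; consequently the reverse-and-complement map carries the family of words produced from $\boldsymbol{k}^\dagger$ by the add-one operation bijectively onto the family produced from $(\boldsymbol{k}^\dagger)^\dagger=\boldsymbol{k}$ by the splitting operation. Applying $\zeta$ and using $\zeta=\zeta\circ(\cdot)^\dagger$ term by term then transforms $A(\boldsymbol{k}^\dagger)$ into $B(\boldsymbol{k})$, as desired.

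Chaining the two steps yields $A(\boldsymbol{k})=A(\boldsymbol{k}^\dagger)=B(\boldsymbol{k})$, which is Theorem~\ref{hof}. The point that will need genuine care is the bijection of the second step, namely the claim that ``adding $1$ to a part'' and ``splitting a part'' are interchanged by duality. There one must check that the two families have equal size (both have $|\boldsymbol{k}|-\dep(\boldsymbol{k})=\dep(\boldsymbol{k}^\dagger)$ members: the splits of $\boldsymbol{k}$ are indexed by the letters $x$ in the word of $\boldsymbol{k}$, and the add-one terms of $\boldsymbol{k}^\dagger$ by the parts of $\boldsymbol{k}^\dagger$), that every word produced is again admissible, and that the degenerate configurations are matched correctly---a part equal to $1$, which corresponds to an empty run of $x$'s, and insertions adjacent to the first or last letter. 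This bookkeeping is routine, but it is genuinely the heart of the argument: with the wrong word convention, or a careless choice between ``before'' and ``after'', the reverse-and-complement map fails to match the two operations.
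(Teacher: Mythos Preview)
The paper does not prove Theorem~\ref{hof}; it is quoted from Hoffman~\cite{Hof92} as an external input, so there is no in-paper argument to compare against. Your derivation is correct: the word-encoding bijection you describe does carry the $\dep(\boldsymbol{k}^\dagger)$ ``add-one'' children of $\boldsymbol{k}^\dagger$ to the $|\boldsymbol{k}|-\dep(\boldsymbol{k})$ ``split'' children of $\boldsymbol{k}$ under reverse-and-complement, so duality gives $A(\boldsymbol{k}^\dagger)=B(\boldsymbol{k})$ term by term, and the $m=1$ case of Theorem~\ref{ohno} then yields $A(\boldsymbol{k})=A(\boldsymbol{k}^\dagger)=B(\boldsymbol{k})$. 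One small wording slip: reversal alone sends ``after a $y$'' to ``before that same $y$''; it is the subsequent letter-swap that converts that reference $y$ into an $x$ while simultaneously turning the inserted $x$ into a $y$. The conclusion is unaffected.

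This route is genuinely different from Hoffman's original 1992 argument, which predates Ohno's relation and proceeds by direct partial-fraction manipulation of the defining nested sums; what you have written is the (known) implication Theorem~\ref{ohno}~$\Rightarrow$~Theorem~\ref{hof}. There is no circularity, since Ohno's relation has proofs independent of Hoffman's---but note that the paper treats Theorems~\ref{ohno} and~\ref{hof} as two separately cited inputs rather than deriving one from the other, so within the paper's logical structure your proof is a permissible shortcut that simply makes Theorem~\ref{hof} redundant as an independent citation.
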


Now we prove our main theorem. 
\begin{proof}[Proof of Theorem \ref{main}]
 By Lemma \ref{dddd} and the identity $D_{m,l}(s,t)=-D_{m,l}(t,s)$, we need to show only the case $D_{m,l}(s,2)=0$, i.e., 
 $F_{m,l}(s;(3))=F_{m,l}(2;(s+1))$. 
 By Lemma \ref{Fmpre1}, we have
 \begin{align*}
  &F_{m,l}(s;(3)) -F_{m,l}(2;(s+1)) \\
  &=\sum_{m_1+m_2=m} \sum_{|\boldsymbol{e}|=m_2} 
   \bigl(
    -\zeta((s+m_1) \hast ( ((3)\sha(\{2\}^l)) \oplus \boldsymbol{e})) \\
    &\qquad\qquad\qquad\qquad +\zeta((s+m_1) \hast ( ((3) \sha(\{2\}^l) )^\dagger \oplus \boldsymbol{e})) 
   \bigr) \\
  &\quad -(l+1) \mathcal{O}_{m} ( (s+1) \sha \{ 2 \}^{l+1} ) 
   +\mathcal{O}_{m} ( (2) \sha ((s+1) \sha \{ 2 \}^{l})^\dagger ).
 \end{align*}
 Put
 \begin{align*}
  A&:=-\sum_{m_1+m_2=m} \sum_{|\boldsymbol{e}|=m_2} 
    \zeta((s+m_1) \hast ( ((3)\sha(\{2\}^l)) \oplus \boldsymbol{e})), \\
  B&:=\sum_{m_1+m_2=m} \sum_{|\boldsymbol{e}|=m_2}
    \zeta((s+m_1) \hast ( ((3) \sha(\{2\}^l) )^\dagger \oplus \boldsymbol{e})), \\
  C&:=-(l+1) \mathcal{O}_{m} ( (s+1) \sha \{ 2 \}^{l+1} ) 
   +\mathcal{O}_{m} ( (2) \sha ((s+1) \sha \{ 2 \}^{l})^\dagger ). 
 \end{align*}
 Then, by definitions, we have
 \begin{align} \label{ppppa}
  A=-\sum_{a=0}^{m} \bigl( 
    \mathcal{O}_{m-a} ( (s+a+3) \sha (\{ 2 \}^{l}) ) 
    +\mathcal{O}_{m-a} ( (s+a+2) \sha (3) \sha (\{ 2 \}^{l-1}) ) 
   \bigr), 
 \end{align}
 where we understand the second term is $0$ if $l=0$, and the same shall apply hereinafter. 
 By Lemma \ref{add1} in the next section, we get
 \begin{align} \label{ppppp}
  \begin{split}
  A&=-\sum_{m_1+\cdots+m_{l+1}=m+s}
   \Biggl( \sum_{p=1}^{l+1} \max\{ m_p-s+1, 0 \} \Biggr)  \\
   &\qquad\qquad\quad \times \sum_{i=1}^{l+1} \zeta(m_{1}+2,\dots,m_{i-1}+2,m_i+3,m_{i+1}+2,\dots,m_{l+1}+2).
  \end{split}
 \end{align} 

 On the other hand, we have
 \begin{align*}
  B&=\sum_{m_1+m_2=m} 
   \Biggl(  
    \sum_{i=1}^{l} \sum_{j=0}^{i-1}
    \mathcal{O}_{m_{2}} ( \{ 2 \}^{j}, s+m_1+2, \{ 2 \}^{i-j-1}, 1, \{ 2 \}^{l-i+1} ) \\
    &\qquad\qquad\quad\,\, +\sum_{i=0}^{l} 
    \mathcal{O}_{m_{2}} ( \{ 2 \}^{i}, s+m_1+1, \{ 2 \}^{l-i+1} ) \\
    &\qquad\qquad\quad\,\, +\sum_{i=0}^{l} \sum_{j=0}^{l-i}
    \mathcal{O}_{m_{2}} ( \{ 2 \}^{i}, 1, \{ 2 \}^{j}, s+m_1+2, \{ 2 \}^{l-i-j} ) 
   \Biggr) \\
  &=\sum_{a=0}^{m} 
   \bigl( 
    \mathcal{O}_{m-a} ( (1) \sha (s+a+2) \sha (\{ 2 \}^{l-1}), 2 ) \\ 
    &\qquad\quad +\mathcal{O}_{m-a} ( (s+a+1) \sha (\{ 2 \}^{l}), 2 ) 
     +\mathcal{O}_{m-a} ( (1) \sha (\{ 2 \}^{l}), s+a+2 )
   \bigr).
 \end{align*}
 Since
 \begin{align*}
  \mathcal{O}_{m} ( (s+1) \sha (\{ 2 \}^{l+1}) ) 
  =\sum_{i=0}^{l+1} 
   \mathcal{O}_{m} ( \{ 2 \}^{i}, s+1, \{ 2 \}^{l-i+1} ) 
 \end{align*}
 and
 \begin{align*}
  &\mathcal{O}_{m} ( (2) \sha ((s+1) \sha (\{ 2 \}^{l}) )^\dagger ) \\
  &=(l+2) \sum_{i=0}^{l+1} 
   \mathcal{O}_{m} ( \{ 2 \}^{i}, s+1, \{ 2 \}^{l-i+1} ) -\mathcal{O}_{m} (s+1,\{ 2 \}^{l+1}) \\
  &\quad +\sum_{i=0}^{l} \sum_{j=1}^{s-2} 
   \mathcal{O}_{m} ( \{ 2 \}^{i}, j+2, s-j+1, \{ 2 \}^{l-i} ), 
 \end{align*}
 we also have
 \begin{align*}
  C&=\sum_{i=0}^{l+1} 
   \mathcal{O}_{m} ( \{ 2 \}^{i}, s+1, \{ 2 \}^{l-i+1} ) -\mathcal{O}_{m} (s+1,\{ 2 \}^{l+1}) \\
  &\quad +\sum_{i=0}^{l} \sum_{j=1}^{s-2} 
   \mathcal{O}_{m} ( \{ 2 \}^{i}, j+2, s-j+1, \{ 2 \}^{l-i} ) \\
  &=\mathcal{O}_{m} ( 2, (s+1) \sha (\{ 2 \}^{l}) ) 
   +\sum_{i=0}^{l} \sum_{j=1}^{s-2} 
   \mathcal{O}_{m} ( \{ 2 \}^{i}, j+2, s-j+1,  \{ 2 \}^{l-i} ) \\
  &=\sum_{i=0}^{l} \sum_{j=0}^{s-2} 
   \mathcal{O}_{m} ( \{ 2 \}^{i}, j+2, s-j+1,  \{ 2 \}^{l-i} ). 
 \end{align*}
 Then we find
 \begin{align}  \label{qqqqq-}
  \begin{split}
  B+C&=\sum_{a=0}^{m} 
   \bigl( 
    \mathcal{O}_{m-a} ( (1) \sha (s+a+2) \sha (\{ 2 \}^{l-1}), 2 ) \\ 
    &\qquad\quad +\mathcal{O}_{m-a} ( (s+a+1) \sha (\{ 2 \}^{l}), 2 ) 
     +\mathcal{O}_{m-a} ( (1) \sha (\{ 2 \}^{l}), s+a+2 )
   \bigr) \\
  &\quad +\sum_{i=0}^{l} \sum_{j=0}^{s-2} 
   \mathcal{O}_{m} ( \{ 2 \}^{i}, j+2, s-j+1,  \{ 2 \}^{l-i} ). 
 \end{split}
 \end{align}
 By Lemma \ref{add2}, we get
 \begin{align} \label{qqqqq}
  \begin{split}
   &B+C \\
   &=\sum_{m_1+\cdots+m_{l+1}=m+s} 
   \Biggl( \sum_{p=1}^{l+1} \max\{ m_p-s+1, 0 \} \Biggr) \\
   &\qquad \times \sum_{i=1}^{l+1} \sum_{j=0}^{m_i} \zeta(m_{1}+2,\dots,m_{i-1}+2,j+1,m_i-j+2,m_{i+1}+2,\dots,m_{l+1}+2). 
  \end{split}
 \end{align}

 Hence, from \eqref{ppppp} and \eqref{qqqqq}, we find
 \begin{align*}
  &F_{m,l}(s;(3)) -F_{m,l}(2;(s+1)) 
  =A+B+C \\
  &=\sum_{m_1+\cdots+m_{l+1}=m+s}
   \Biggl( \sum_{p=1}^{l+1} \max\{ m_p-s+1, 0 \} \Biggr)  \\
   &\qquad\qquad \times \sum_{i=1}^{l+1} \Biggl( 
    -\zeta(m_{1}+2,\dots,m_{i-1}+2,m_i+3,m_{i+1}+2,\dots,m_{l+1}+2) \\
   &\qquad\qquad\qquad\quad +\sum_{j=0}^{m_i} \zeta(m_{1}+2,\dots,m_{i-1}+2,j+1,m_i-j+2,m_{i+1}+2,\dots,m_{l+1}+2) 
    \Biggr). 
 \end{align*}
 By Theorem \ref{hof}, we obtain the result. 
\end{proof}

\section{Additional lemmas}
Before ending this paper, we shall state some lemmas which were used in the proof of Theorem \ref{main}.
Let $l$ be a positive integer. 
For integers $p,q$ with $1\le p,q\le l+1$, put 
\begin{align*}
 G_{p,q}:=
 \begin{cases}
  \displaystyle{
  \sum_{a=0}^{m} \mathcal{O}_{m-a} (\{ 2 \}^{p-1}, s+a+2, \{ 2 \}^{q-p-1}, 3, \{ 2 \}^{l-q+1}) \quad \quad\, (p<q), }\\
  \displaystyle{
  \sum_{a=0}^{m} \mathcal{O}_{m-a} (\{ 2 \}^{p-1}, s+a+3, \{ 2 \}^{l-p+1}) \qquad \qquad \qquad \quad \, (p=q), }\\
  \displaystyle{
  \sum_{a=0}^{m} \mathcal{O}_{m-a} (\{ 2 \}^{q-1}, 3, \{ 2 \}^{p-q-1}, s+a+2, \{ 2 \}^{l-p+1}) \qquad (p>q) }
 \end{cases}
\end{align*}
and
\begin{align*}
 H_{p,q}
 &:=\sum_{m_1+\cdots+m_l=m+s} \max\{ m_p-s+1 ,0 \} \\
  &\quad \times \zeta(m_{1}+2,\dots,m_{q-1}+2,m_q+3,m_{q+1}+2,\dots,m_{l+1}+2). 
\end{align*}
Note that we have
\begin{align*}
 \textrm{R.H.S. of \eqref{ppppa}}
 &=-\sum_{1\le p,q\le l+1} G_{p,q}, \\
 \textrm{R.H.S. of \eqref{ppppp}}
 &=-\sum_{1\le p,q\le l+1} H_{p,q}.
\end{align*}
\begin{lem} \label{add1}
 For integers $i,j$ with $1\le i,j\le l+1$, we have
 \begin{align*}
  G_{p,q}=H_{p,q}. 
 \end{align*} 
\end{lem}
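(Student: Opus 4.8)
\textbf{Proof proposal for Lemma \ref{add1}.}

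The plan is to evaluate both $G_{p,q}$ and $H_{p,q}$ as explicit sums of multiple zeta values indexed in the same way, and then match them term by term. The key observation is that in all three cases of the definition of $G_{p,q}$, the inner summand is an Ohno sum $\mathcal{O}_{m-a}$ of a single index of the shape ``a block of $2$'s, then one entry equal to $s+a+2$ or $s+a+3$ or a slot with a $3$, then more $2$'s''. Expanding each $\mathcal{O}_{m-a}$ by its definition as $\sum_{|\boldsymbol{e}|=m-a}\zeta(\cdots\oplus\boldsymbol{e})$, one obtains a sum over how the extra weight $m-a$ is distributed among the $l+1$ slots; combining this with the summation over $a$ (which controls how much extra weight sits in the distinguished $s$-slot versus comes for free), one should get precisely a sum of $\zeta(m_1+2,\dots,m_q+3,\dots,m_{l+1}+2)$ over compositions $m_1+\cdots+m_{l+1}=m+s$, weighted by exactly the combinatorial factor $\max\{m_p-s+1,0\}$ that appears in $H_{p,q}$. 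So the first step is a careful bookkeeping computation: fix the composition $(m_1,\dots,m_{l+1})$ on the right-hand side, and count in how many ways it arises from the left-hand side, i.e. for how many values of $a$ and with what multiplicity. The distinguished slot is slot $p$, which must absorb $s+a$ (beyond its base value $2$), so one needs $m_p \ge s$ for a contribution, i.e. $m_p - s \ge 0$, and then $a$ ranges over $0,1,\dots,m_p-s$, giving the count $m_p - s + 1 = \max\{m_p-s+1,0\}$.

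Concretely, I would treat the three cases $p<q$, $p=q$, $p>q$ uniformly by noting that in each the underlying base index has $2$ in every slot except slot $q$, which carries a $3$, and that the $\mathcal{O}_{m-a}$-operation plus the $\oplus \boldsymbol{e}$ simply redistributes total extra weight $(m-a) + (s+a) = m+s$ among the $l+1$ slots with no constraint other than nonnegativity --- except that the amount landing in slot $p$ must be at least $s+a$ for each fixed $a$, because that slot already contains the forced summand $s+a+2$ (or, when $p=q$, $s+a+3$, which is still $\ge 2$ plus at least $s+a+1 \ge s+a$, consistent with the $+3$ in slot $q=p$ on the right). Writing $m_p$ for the total extra weight in slot $p$ (so the entry is $m_p+2$, or $m_p+3$ if $p=q$), the condition ``$\ge s+a$'' together with $0\le a\le m$ yields exactly $a \in \{0,\dots,\min(m,m_p-s)\}$; since $m_p \le m+s$ always, $\min(m,m_p-s)=m_p-s$ when $m_p\ge s$, and there is no contribution otherwise. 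Summing the constant $1$ over these $a$ produces the factor $\max\{m_p-s+1,0\}$, and the remaining slots are completely free, which reproduces the sum over all compositions in $H_{p,q}$. This matching should be checked to also handle the boundary conventions (the ``second term is $0$ if $l=0$'' remark, and $q$ at the extremes).

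The main obstacle I anticipate is purely combinatorial precision rather than conceptual: making sure the index-shifts line up exactly, in particular that the $\hat\ast$-style placement of the large entry in $A$ (equation \eqref{ppppa}) after the shuffle expansion really does correspond, slot for slot, to putting weight $m_p$ into position $p$ of the composition, and that the $3$ stays in position $q$ with the correct offset ($m_q+3$ versus $m_q+2$). One has to be careful that when $p=q$ the entry is $s+a+3$, not $s+a+2$, so that the ``extra weight'' $m_p$ in that slot satisfies $m_p \ge s+a+1$; but on the right-hand side slot $q=p$ carries $m_q+3$, so the offset is consistent and the inequality $m_p \ge s$ (hence the factor $\max\{m_p-s+1,0\}$) is unchanged. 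Once the dictionary between the two indexings is pinned down, the equality $G_{p,q}=H_{p,q}$ follows by interchanging the (finite) orders of summation and reading off that both sides are the same sum of the same MZVs with the same coefficients. I would organize the write-up as: (i) expand $G_{p,q}$ using the definition of $\mathcal{O}_{m-a}$; (ii) reindex the distribution of extra weight, separating the distinguished slot $p$; (iii) perform the sum over $a$ to produce $\max\{m_p-s+1,0\}$; (iv) recognize the result as $H_{p,q}$.
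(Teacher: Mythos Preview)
Your proposal is correct and follows essentially the same route as the paper: both expand $G_{p,q}$ via the definition of $\mathcal{O}_{m-a}$, isolate the contribution of the distinguished slot $p$, and observe that summing over $a$ produces the weight $\max\{m_p-s+1,0\}$ (the paper does this by the substitution $u=a+e_p$ and swapping sums to get a factor $u+1$, which is exactly your count of admissible $a$'s for a fixed composition). Your momentary claim that in the case $p=q$ one needs $m_p\ge s+a+1$ is a slip, but you immediately correct it by noting that the base on the $H$-side is $m_p+3$, so the offset cancels and the inequality is still $m_p\ge s+a$; the final factor $\max\{m_p-s+1,0\}$ is indeed unchanged.
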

\begin{proof}
 We prove this lemma only for $i<j$. The other cases can be proved similarly. 

 We have
 \begin{align*}
  G_{p,q}
  &=\sum_{a=0}^{m} \sum_{\substack{ e_1+\cdots+e_{l+1}=m-a \\ e_1,\dots,e_{l+1}\ge0 }}
   \zeta(2+e_1,\dots, 2+e_{p-1}, s+a+2+e_p , \\
   &\qquad \qquad \qquad \qquad \qquad  2+e_{p+1},\dots, 2+e_{q-1}, 3+e_q, 2+e_{q+1},\dots, 2+e_{l+1}) \\
  &=\sum_{a=0}^{m} \sum_{e_p=0}^{m-a} 
   \sum_{\substack{ e_1+\cdots+\hat{e_{p}}+\cdots+e_{l+1}=m-a-e_p \\ e_1,\dots, \hat{e_{p}},\dots, e_{l+1}\ge0 }}
   \zeta(2+e_1,\dots, 2+e_{p-1}, s+a+2+e_p , \\
   &\qquad \qquad \qquad \qquad \qquad  2+e_{p+1},\dots, 2+e_{q-1}, 3+e_q, 2+e_{q+1},\dots, 2+e_{l+1}) \\
  &=\sum_{a=0}^{m} \sum_{u=a}^{m} 
   \sum_{\substack{ e_1+\cdots+\hat{e_{p}}+\cdots+e_{l+1}=m-u \\ e_1,\dots, \hat{e_{p}},\dots, e_{l+1}\ge0 }}
   \zeta(2+e_1,\dots, 2+e_{p-1}, s+u+2, \\
   &\qquad \qquad \qquad \qquad \qquad  2+e_{p+1},\dots, 2+e_{q-1}, 3+e_q, 2+e_{q+1},\dots, 2+e_{l+1}),
 \end{align*}
 where the symbol $\hat{e_p}$ in the summation means that the part $e_p$ is skipped. 
 Then we find
 \begin{align*}
  G_{p,q}
  &=\sum_{u=0}^{m} \sum_{a=0}^{u} 
   \sum_{\substack{ e_1+\cdots+\hat{e_{p}}+\cdots+e_{l+1}=m-u \\ e_1,\dots, \hat{e_{p}},\dots, e_{l+1}\ge0 }}
   \zeta(2+e_1,\dots, 2+e_{p-1}, s+u+2, \\
   &\qquad \qquad \qquad \qquad \qquad  2+e_{p+1},\dots, 2+e_{q-1}, 3+e_q, 2+e_{q+1},\dots, 2+e_{l+1})\\
  &=\sum_{u=0}^{m} (u+1) 
   \sum_{\substack{ e_1+\cdots+\hat{e_{p}}+\cdots+e_{l+1}=m-u \\ e_1,\dots, \hat{e_{p}},\dots, e_{l+1}\ge0 }}
   \zeta(2+e_1,\dots, 2+e_{p-1}, s+u+2, \\
   &\qquad \qquad \qquad \qquad \qquad  2+e_{p+1},\dots, 2+e_{q-1}, 3+e_q, 2+e_{q+1},\dots, 2+e_{l+1}).
 \end{align*}

 On the other hand, we have
 \begin{align*}
  H_{p,q}
  &=\sum_{u=1}^{m+1} u
   \sum_{m_1+\cdots+\hat{m_p}+\cdots+m_{l+1}=m-u+1} 
   \zeta(m_{1}+2,\dots, m_{p-1}+2, s+u+1, \\
   &\qquad \qquad \qquad \qquad m_{p+1}+2, \dots,m_{q-1}+2,m_q+3,m_{q+1}+2,\dots,m_{l+1}+2)\\ 
  &=\sum_{u=0}^{m} (u+1)
   \sum_{m_1+\cdots+\hat{m_p}+\cdots+m_{l+1}=m-u} 
   \zeta(m_{1}+2,\dots, m_{p-1}+2, s+u+2, \\
   &\qquad \qquad \qquad \qquad m_{p+1}+2, \dots,m_{q-1}+2,m_q+3,m_{q+1}+2,\dots,m_{l+1}+2). 
 \end{align*}
 This finishes the proof. 
\end{proof}

For integers $p,q$ with $1\le p,q\le l+1$, put 
\begin{align*}
 I_{p,q}:=
 \begin{cases}
  \displaystyle{ 
  \sum_{a=0}^{m} \mathcal{O}_{m-a} (\{ 2 \}^{p-1}, s+a+2, \{ 2 \}^{q-p-1}, 1, \{ 2 \}^{l-q+2}) 
  \qquad \qquad \qquad\qquad \quad \,\,\, (p<q), }\\
  \displaystyle{ 
  \sum_{a=0}^{m} \Bigl( \mathcal{O}_{m-a} (\{ 2 \}^{p-1}, 1, s+a+2, \{ 2 \}^{l-p+1}) 
    +\mathcal{O}_{m-a} (\{ 2 \}^{p-1}, s+a+1, \{ 2 \}^{l-p+2}) \Bigr) }\\ 
  \displaystyle{   
  \quad +\sum_{j=0}^{s-2} \mathcal{O}_{m} (\{ 2 \}^{p-1}, j+2, s-j+1, \{ 2 \}^{l-p+1}) 
  \qquad \qquad \qquad\qquad \qquad \quad (p=q), }\\
  \displaystyle{ 
  \sum_{a=0}^{m} \mathcal{O}_{m-a} (\{ 2 \}^{q-1}, 1, \{ 2 \}^{p-q}, s+a+2, \{ 2 \}^{l-p+1}) 
  \qquad \qquad \qquad\qquad \qquad \,\,  (p>q) }
 \end{cases}
\end{align*}
and
\begin{align*}
 J_{p,q}
 &:=\sum_{m_1+\cdots+m_{l+1}=m+s} \max\{ m_p-s+1 ,0 \} \\
  &\qquad \times \sum_{j=0}^{m_q} \zeta(m_{1}+2,\dots,m_{q-1}+2,j+1,m_q-j+2,m_{q+1}+2,\dots,m_{l+1}+2). 
\end{align*}
Note that we have
\begin{align*}
 \textrm{R.H.S. of \eqref{qqqqq-}}
 &=\sum_{1\le p,q\le l+1} I_{p,q}, \\
 \textrm{R.H.S. of \eqref{qqqqq}}
 &=\sum_{1\le p,q\le l+1} J_{p,q}.
\end{align*}
\begin{lem} \label{add2}
 For an integer $p,q$ with $1\le p,q\le l+1$, we have
 \begin{align*}
  I_{p,q}=J_{p,q}. 
 \end{align*} 
\end{lem}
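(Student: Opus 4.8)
The plan is to prove $I_{p,q}=J_{p,q}$ by treating the three cases $p<q$, $p=q$, and $p>q$ separately, in each of them expanding every $\mathcal{O}_{m-a}(\cdots)$ as $\sum_{|\boldsymbol{e}|=m-a}\zeta(\cdots\oplus\boldsymbol{e})$ and then matching the two sides through a change of summation variables, in the same spirit as the proof of Lemma \ref{add1}.

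For $p<q$ (the case $p>q$ being entirely analogous up to a shift of indices), I would first single out, in the expansion of $I_{p,q}$, the component $e_p$ of $\boldsymbol{e}$ that sits on the entry $s+a+2$, and set $u=a+e_p$; since $\sum_{a=0}^{u}1=u+1$, the $a$-summation collapses and produces the linear factor $u+1$. Writing $m_p=s+u$ turns this into $\max\{m_p-s+1,0\}$ --- the compositions with $m_p\le s-1$ carry weight $0$ and simply do not occur on the $I$-side. It then remains to identify the remaining data with that of $J_{p,q}$, and the one point that needs care is that the two consecutive entries $j+1$ and $m_q-j+2$ produced in $J_{p,q}$ by the Hoffman-type splitting at position $q$ should be read off the two consecutive entries $1+e_q$ and $2+e_{q+1}$ of the $I$-side zeta at positions $q$ and $q+1$: one sets $j=e_q$ and $m_q=e_q+e_{q+1}$, and since letting $e_q,e_{q+1}$ run independently over $\mathbb{Z}_{\ge0}$ is the same as letting $m_q$ run over $\mathbb{Z}_{\ge0}$ and $j$ over $\{0,\dots,m_q\}$, the remaining free components of $\boldsymbol{e}$ match those of $J_{p,q}$ verbatim, giving $I_{p,q}=J_{p,q}$.

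The case $p=q$ is where the real work lies. Here $I_{p,p}$ is a sum of three terms, and the point to establish is that together they reassemble the single inner sum $\sum_{j=0}^{m_p}$ occurring in $J_{p,p}$. Expanding each term as above and, after substituting $m_p=s+u$, performing the change of variables $u=a+e_p+e_{p+1}$ (and $u=e_p+e_{p+1}$ in the $a$-free term), I expect each term to take the shape $\sum_{u}\sum_{\mathrm{rest}}\sum_{j}c(j,u,s)\,\zeta(\dots,j+1,s+u-j+2,\dots)$ for an explicit nonnegative multiplicity $c$ supported on a range of $j$ depending on the term: the term with entry $(\dots,1,s+a+2,\dots)$ should contribute multiplicity $u-j+1$ on $0\le j\le u$, the term with entry $s+a+1$ multiplicity $j-s+1$ on $s\le j\le s+u$, and the term $\sum_{j'=0}^{s-2}\mathcal{O}_m(\dots,j'+2,s-j'+1,\dots)$ the piecewise count $\min\{s-2,j-1\}-\max\{0,j-1-u\}+1$. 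A short verification over the ranges $0\le j\le u$ and $u+1\le j\le s+u$ should then show that these three multiplicities always sum to $u+1$, which is precisely the weight appearing in $J_{p,p}$, yielding $I_{p,p}=J_{p,p}$.

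The hard part will be this last case. Unlike for $p\ne q$ there is no single clean bijection, so one has to keep careful track of which values of $j$ --- and with what multiplicity --- each of the three summands of $I_{p,p}$ contributes, and then check that the multiplicities telescope to the constant $u+1$; the bookkeeping for the third summand, whose multiplicity is genuinely piecewise in $j$, is the most delicate point.
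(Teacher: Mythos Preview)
Your proposal is correct and follows essentially the same route as the paper. The cases $p<q$ and $p>q$ are handled identically: expand $\mathcal{O}_{m-a}$, set $u=a+e_p$, collapse the $a$-sum into the factor $u+1$, and match the split entries $(1+e_q,2+e_{q+1})$ with $(j+1,m_q-j+2)$ via $j=e_q$, $m_q=e_q+e_{q+1}$.

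For $p=q$ the content is the same but the bookkeeping is organised differently. You fix $j$ and compute, for each of the three summands of $I_{p,p}$, the multiplicity with which the pair $(j+1,\,s+u-j+2)$ appears, and then verify that these three (piecewise) multiplicities add up to the constant $u+1$. The paper instead keeps an auxiliary inner variable (its $u$, with outer variable $v$ playing the role of your $u$) and shows that, for each fixed value of this inner variable, the three summands contribute \emph{disjoint, complementary} ranges of $j$, namely $[1,v-u+1]$, $[v-u+2,s+v-u]$, and $[s+v-u+1,s+v+1]$, whose union is the full range $[1,s+v+1]$; the final factor $v+1$ then falls out from summing over the inner variable. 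This ``partition of the $j$-range'' viewpoint avoids the piecewise case analysis that you anticipate for the third summand, so it is a bit cleaner to write down; conversely, your direct multiplicity count is more mechanical and requires no clever reparametrisation. Either way the argument goes through.
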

\begin{proof}
 The case $p<q$ can be proved in a similar mannar to the proof of the previous lemma.  
 Indeed, we have
 \begin{align*}
  I_{p,q}
  &=\sum_{a=0}^{m} \sum_{\substack{ e_1+\cdots+e_{l+2}=m-a \\ e_1,\dots,e_{l+2}\ge0 }}
   \zeta(2+e_1,\dots, 2+e_{p-1}, s+a+2+e_p , \\
   &\qquad \qquad \qquad \qquad \qquad  2+e_{p+1},\dots, 2+e_{q-1}, 1+e_q, 2+e_{q+1},\dots, 2+e_{l+2}) \\
  &=\sum_{a=0}^{m} \sum_{e_p=0}^{m-a} 
   \sum_{\substack{ e_1+\cdots+\hat{e_{p}}+\cdots+e_{l+2}=m-a-e_p \\ e_1,\dots, \hat{e_{p}},\dots, e_{l+2}\ge0 }}
   \zeta(2+e_1,\dots, 2+e_{p-1}, s+a+2+e_p , \\
   &\qquad \qquad \qquad \qquad \qquad  2+e_{p+1},\dots, 2+e_{q-1}, 1+e_q, 2+e_{q+1},\dots, 2+e_{l+2}) \\
  &=\sum_{a=0}^{m} \sum_{u=a}^{m} 
   \sum_{\substack{ e_1+\cdots+\hat{e_{p}}+\cdots+e_{l+2}=m-u \\ e_1,\dots, \hat{e_{p}},\dots, e_{l+2}\ge0 }}
   \zeta(2+e_1,\dots, 2+e_{p-1}, s+u+2, \\
   &\qquad \qquad \qquad \qquad \qquad  2+e_{p+1},\dots, 2+e_{q-1}, 1+e_q, 2+e_{q+1},\dots, 2+e_{l+2}).
 \end{align*}
 Then we find
 \begin{align*}
  I_{p,q}
  &=\sum_{u=0}^{m} \sum_{a=0}^{u} 
   \sum_{\substack{ e_1+\cdots+\hat{e_{p}}+\cdots+e_{l+2}=m-u \\ e_1,\dots, \hat{e_{p}},\dots, e_{l+2}\ge0 }}
   \zeta(2+e_1,\dots, 2+e_{p-1}, s+u+2, \\
   &\qquad \qquad \qquad \qquad \qquad  2+e_{p+1},\dots, 2+e_{q-1}, 1+e_q, 2+e_{q+1},\dots, 2+e_{l+2})\\
  &=\sum_{u=0}^{m} (u+1) 
   \sum_{\substack{ e_1+\cdots+\hat{e_{p}}+\cdots+e_{l+2}=m-u \\ e_1,\dots, \hat{e_{p}},\dots, e_{l+2}\ge0 }}
   \zeta(2+e_1,\dots, 2+e_{p-1}, s+u+2, \\
   &\qquad \qquad \qquad \qquad \qquad  2+e_{p+1},\dots, 2+e_{q-1}, 1+e_q, 2+e_{q+1},\dots, 2+e_{l+2}).
 \end{align*}
 We also have
 \begin{align*}
 J_{p,q}
 &=\sum_{u=1}^{m+1} u
  \sum_{m_1+\cdots+\hat{m_p}+\cdots+m_{l+1}=m-u+1} 
   \sum_{j=0}^{m_q} \\
   &\qquad \qquad  \zeta(m_{1}+2,\dots,m_{p-1}+2,s+u+1,m_{p+1}+2, \\
   &\qquad \qquad \qquad \dots,m_{q-1}+2,j+1,m_q-j+2,m_{q+1}+2,\dots,m_{l+1}+2) \\ 
 &=\sum_{u=0}^{m} (u+1)
  \sum_{m_1+\cdots+\hat{m_p}+\cdots+m_{l+1}=m-u} 
  \sum_{j=0}^{m_q} \\
   &\qquad \qquad  \zeta(m_{1}+2,\dots,m_{p-1}+2,s+u+2,m_{p+1}+2, \\
   &\qquad \qquad \qquad \dots,m_{q-1}+2,j+1,m_q-j+2,m_{q+1}+2,\dots,m_{l+1}+2) \\
  &=\sum_{u=0}^{m} (u+1) 
  \sum_{m_1+\cdots+\hat{m_p}+\cdots+m_{l+1}=m-u} 
   \zeta(m_1+2,\dots, m_{p-1}+2, s+u+2, \\
   &\qquad \qquad \qquad \qquad \qquad m_{p+1}+2,\dots, m_{q-1}+2, m_q+1, m_{q+1}+2,\dots, m_{l+2}+2).    
 \end{align*}
 Thus we find the case $p<q$ holds, and the case $p>q$ can be proved similarly.  
 
 Now, we consider the case $p=q$. 
 By the similar argument above, we find
 \begin{align*}
  &\sum_{a=0}^{m} \mathcal{O}_{m-a} (\{ 2 \}^{p-1}, 1, s+a+2, \{ 2 \}^{l-p+1}) \\
  &=\sum_{u=0}^{m} (u+1) 
  \sum_{m_1+\cdots+\hat{m_{p+1}}+\cdots+m_{l+2}=m-u} \\
   &\qquad \qquad \quad 
   \zeta(m_1+2,\dots, m_{p-1}+2, m_p+1, s+u+2, m_{p+2}+2,\dots, m_{l+2}+2). 
 \end{align*}
 Thus we have
 \begin{align*}
  &\sum_{a=0}^{m} \mathcal{O}_{m-a} (\{ 2 \}^{p-1}, 1, s+a+2, \{ 2 \}^{l-p+1}) \\
  &=\sum_{u=0}^{m} (u+1) 
  \sum_{v=0}^{m-u} 
  \sum_{m_1+\cdots+m_{l}=m-u-v} \\
   &\qquad \qquad \quad 
   \zeta(m_1+2,\dots, m_{p-1}+2, v+1, s+u+2, m_{p}+2,\dots, m_{l}+2) \\
  &=\sum_{u=0}^{m} (u+1) 
  \sum_{v=u}^{m} 
  \sum_{m_1+\cdots+m_{l}=m-v} \\
   &\qquad \qquad \quad 
   \zeta(m_1+2,\dots, m_{p-1}+2, v-u+1, s+u+2, m_{p}+2,\dots, m_{l}+2) \\
  &=\sum_{v=0}^{m}  
  \sum_{m_1+\cdots+m_{l}=m-v} 
  \sum_{u=0}^{v} (u+1)\\
   &\qquad \qquad \quad 
   \zeta(m_1+2,\dots, m_{p-1}+2, v-u+1, s+u+2, m_{p}+2,\dots, m_{l}+2).   
 \end{align*}  
 Since
 \begin{align*}
  \sum_{u=0}^{v} (u+1) \times 
   (v-u+1, s+u+2) 
  =\sum_{u=0}^{v}
   \sum_{j=1}^{v-u+1} 
   (j, s+v-j+3),   
 \end{align*}
 we get
 \begin{align} \label{g1}
 \begin{split}
  &\sum_{a=0}^{m} \mathcal{O}_{m-a} (\{ 2 \}^{p-1}, 1, s+a+2, \{ 2 \}^{l-p+1}) \\
  &=\sum_{v=0}^{m}  
   \sum_{m_1+\cdots+m_{l}=m-v} 
   \sum_{u=0}^{v} 
   \sum_{j=1}^{v-u+1} \\
    &\qquad \qquad \quad 
    \zeta(m_1+2,\dots, m_{p-1}+2, j, s+v-j+3, m_{p}+2,\dots, m_{l}+2).  
 \end{split}
 \end{align}
 Similarly, we also have        
 \begin{align} \label{g2}
 \begin{split}
  &\sum_{a=0}^{m} \mathcal{O}_{m-a} (\{ 2 \}^{p-1}, s+a+1, \{ 2 \}^{l-p+2}) \\
  &=\sum_{v=0}^{m}  
   \sum_{m_1+\cdots+m_{l}=m-v} 
   \sum_{u=0}^{v} 
   \sum_{j=s+v-u+1}^{s+v+1} \\
    &\qquad \qquad \quad 
    \zeta(m_1+2,\dots, m_{p-1}+2, j, s+v-j+3, m_{p}+2,\dots, m_{l}+2). 
 \end{split}
 \end{align}
 Since
 \begin{align*} 
 &\sum_{j=0}^{s-2} \mathcal{O}_{m} (\{ 2 \}^{p-1}, j+2, s-j+1, \{ 2 \}^{l-p+1}) \\
  &=\sum_{v=0}^{m}  
   \sum_{m_1+\cdots+m_{l}=m-v} 
   \sum_{u=0}^{v} 
   \sum_{j=0}^{s-2} \\
    &\qquad \quad 
    \zeta(m_1+2,\dots, m_{p-1}+2, j-u+v+2, s+u-j+1, m_{p}+2,\dots, m_{l}+2),
 \end{align*}
 we have
 \begin{align} \label{g3}
 \begin{split}
 &\sum_{j=0}^{s-2} \mathcal{O}_{m} (\{ 2 \}^{p-1}, j+2, s-j+1, \{ 2 \}^{l-p+1}) \\
  &=\sum_{v=0}^{m}  
   \sum_{m_1+\cdots+m_{l}=m-v} 
   \sum_{u=0}^{v} 
   \sum_{j=v-u+2}^{s+v-u} \\
    &\qquad \qquad \quad 
    \zeta(m_1+2,\dots, m_{p-1}+2, j, s+v-j+3, m_{p}+2,\dots, m_{l}+2).  
 \end{split}
 \end{align}
 From \eqref{g1}, \eqref{g2}, and \eqref{g3}, we get
 \begin{align*}
  I_{p,p} 
  &=\sum_{v=0}^{m}  
   \sum_{m_1+\cdots+m_{l}=m-v} 
   \sum_{u=0}^{v} 
   \sum_{j=1}^{s+v+1} \\
    &\qquad \qquad \quad 
    \zeta(m_1+2,\dots, m_{p-1}+2, j, s+v-j+3, m_{p}+2,\dots, m_{l}+2)\\
  &=\sum_{v=0}^{m} 
   (v+1) 
   \sum_{m_1+\cdots+m_{l}=m-v} 
   \sum_{j=1}^{s+v+1} \\
    &\qquad \qquad \quad 
    \zeta(m_1+2,\dots, m_{p-1}+2, j, s+v-j+3, m_{p}+2,\dots, m_{l}+2).    
 \end{align*}

 Hence, we have $I_{p,p}=J_{p,p}$.
 This finishes the proof. 
\end{proof}


\end{document}